\theoremstyle{plain}
\newtheorem{thm}{Theorem}[section]
\newtheorem{lem}[thm]{Lemma}
\newtheorem{prop}[thm]{Proposition}
\newtheorem{cor}[thm]{Corollary}
\newtheorem{claim}[thm]{Claim}
\newtheorem{prob}[thm]{Problem}
\theoremstyle{definition}
\newtheorem{defn}[thm]{Definition}
\newtheorem{rem}[thm]{Remark}
\newtheorem{example}[thm]{Example}
\DeclareMathOperator{\V}{\mathcal{V}}
\DeclareMathOperator{\Z}{\mathbb{Z}}
\DeclareMathOperator{\CO}{\mathcal{O}}
\DeclareMathOperator{\CN}{\mathbb{C}}
\DeclareMathOperator{\K}{\mathcal{K}}
\DeclareMathOperator{\D}{\mathrm{D}}
\DeclareMathOperator{\E}{\mathcal{E}}
\DeclareMathOperator{\F}{\mathcal{F}}
\DeclareMathOperator{\Hom}{\mathrm{Hom}}
\DeclareMathOperator{\Id}{\mathrm{id}}
\DeclareMathOperator{\Coh}{\mathrm{Coh}}
\DeclareMathOperator{\DR}{\mathrm{DR}}
\DeclareMathOperator{\Hdg}{\mathrm{Hdg}}
\DeclareMathOperator{\HC}{\mathcal{H}\mathcal{H}}
\numberwithin{equation}{section}
\begin{document}

\title{Hodge cohomology on blow-ups along subvarieties}

\author[S. Rao]{Sheng Rao}
\address{School of Mathematics and Statistics, Wuhan  University,
Wuhan 430072, P.R. China;
Universit\'{e} de Grenoble-Alpes, Institut Fourier (Math\'{e}matiques)
UMR 5582 du C.N.R.S., 100 rue des Maths, 38610 Gi\`{e}res, France}
\email{likeanyone@whu.edu.cn, sheng.rao@univ-grenoble-alpes.fr}

\author[S. Yang]{Song Yang}
\address{Center for Applied Mathematics, Tianjin University, Tianjin 300072, P.R. China}%
\email{syangmath@tju.edu.cn}%

\author[X. Yang]{Xiangdong Yang}
\address{Department of mathematics, Lanzhou University, Lanzhou 730000, P.R. China}
\email{yangxd@lzu.edu.cn}

\author[X. Yu]{Xun Yu}
\address{Center for Applied Mathematics, Tianjin University, Tianjin 300072, P.R. China}%
\email{xunyu@tju.edu.cn}%

\date{\today}

\begin{abstract}
We establish a blow-up formula for Hodge cohomology of locally free sheaves
on smooth proper varieties over an algebraically closed field of positive
characteristic.
For this, we introduce a notion of relative Hodge sheaves and study their  behavior under blow-ups along smooth centers.
In particular, as an application, we study the blow-up invariance of the $E_2$-degeneracy of the Hochschild--Kostant--Rosenberg spectral sequence for smooth proper varieties.
\end{abstract}

\keywords{Rational and birational map, Blow-up, Hodge cohomology, Spectral sequence}
\subjclass[2010]{14E05, 14F43}

\maketitle

\setcounter{tocdepth}{1}
\tableofcontents


\section{Introduction}

\subsection{Motivation and results}
Let $X$ be a smooth proper variety over an algebraically closed field $k$ of arbitrary characteristic.
Consider the {\it Hodge--de Rham spectral sequence}
\begin{equation}\label{Hodge-to-de Rham}
E_{1}^{p,q}=H^{q}(X, \Omega_{X}^{p})\Longrightarrow H_{\DR}^{p+q}(X/k),
\end{equation}
where $H_{\DR}^{\bullet}(X/k)$ is the algebraic de Rham cohomology
and $\Omega_{X}^{p}$ is the sheaf of regular differential $p$-forms on $X$.
If $k=\CN$, we set $X^{an}$ as the associated compact complex manifold of $X$.
If $X$ is {\it projective},
then $X^{an}$ is {K\"{a}hler};
by Hodge theory and Serre's GAGA \cite{Ser56},
the Hodge symmetry and the $E_{1}$-degeneracy of Hodge--de Rham spectral sequence \eqref{Hodge-to-de Rham} holds here.
If $X$ is {\it non-projective}, then $X^{an}$ is {non-K\"{a}hler};
based on the Chow's Lemma and Hironaka's resolution of singularities \cite{Hir64},
Deligne \cite[(5.3)]{Del68} showed the Hodge symmetry and the $E_{1}$-degeneracy of \eqref{Hodge-to-de Rham} hold then; intrinsically, such $X^{an}$ is a Moishezon manifold and thus satisfies the $\partial\bar{\partial}$-Lemma which in turn implies the Hodge symmetry and the $E_{1}$-degeneracy (cf. \cite{DGMS75,Voi02}).
More generally, if $k$ is of characteristic $\mathrm{char}(k)=0$, using Lefschetz principle,
Deligne \cite[(5.5)]{Del68} showed that the Hodge symmetry and the $E_{1}$-degeneracy also hold; see also Deligne--Illusie \cite[2.7]{DL87}.
Furthermore, Deligne--Illusie \cite{DL87} showed that the $E_{1}$-degeneracy still holds if $\mathrm{char}(k)\geq \dim\, X$ and $X$ lifts to the ring
$W_{2}(k)$ of Witt vectors of length $2$.
However, in general,
the Hodge symmetry and the $E_{1}$-degeneracy fail in positive characteristic and for compact complex manifolds.
In \cite[Theorem 1.6]{RYY19a}, we derived the blow-up invariance of the $E_{1}$-degeneracy of \eqref{Hodge-to-de Rham} for compact complex manifolds by obtaining a blow-up formula of Dolbeault cohomology.
Let $Z$ be a smooth closed subvariety of $X$ and $\tilde{X}$ the blow-up of $X$ along the center $Z$.
By the {\it blow-up invariance} of a property, we mean that such a property holds for $X$ and $Z$ if and only if so does for the blowing up variety $\tilde{X}$.
In positive characteristic, although the liftability to $W_{2}(k)$ possibly fails under blow-ups of higher dimensional varieties by Liedtke--Satriano \cite{LS14},
Achinger--Zdanowicz \cite[Corollary 2.9.(1)]{AZ17} obtained the blow-up invariance of the $E_{1}$-degeneracy of \eqref{Hodge-to-de Rham} by using the blow-up formulae of algebraic de Rham and total Hodge cohomologies via Voevodsky's blow-up formula \cite[(3.5.3)]{Voe00} of motives.

In a more general setting, consider a locally free sheaf $\mathcal{E}$ on $X$ which is endowed with an integrable connection.
We denote by $H_{\DR}^{l}(X/k; \mathcal{E})$ the {\it $l$-th algebraic de Rham cohomology of $X$ with coefficients in $\E$} (see \cite{Gro66}).
Then one has the {\it twisted Hodge--de Rham spectral sequence}:
\begin{equation}\label{twisted-Hodge-to-de Rham}
E_{1}^{p,q}=H^{q}(X, \Omega_{X}^{p}\otimes \E)\Longrightarrow H_{\DR}^{p+q}(X/ k; \E).
\end{equation}
In general, the twisted Hodge--de Rham spectral sequence does not degenerate at the $E_{1}$-term.
So a natural problem is:

\begin{prob}\label{questin-0}
 Does the blow-up invariance of the $E_{1}$-degeneracy for the twisted Hodge--de Rham spectral sequence hold for smooth proper varieties over $k$ of positive characteristic?
\end{prob}

Motivated by Problem \ref{questin-0}, we prove a blow-up formula for Hodge cohomology of locally free sheaves.
For this purpose
we will introduce a notion of relative Hodge sheaves associated to the pair of a smooth proper variety and its smooth closed subvariety.
Suppose that $X$ is a smooth proper variety together with a smooth closed subvariety
$j: Y\hookrightarrow X$.
The kernel sheaf $\ker\big(\Omega_{X}^{p} \stackrel{j^{\#}}\rightarrow  j_{\ast}\Omega_{Y}^{p}\big)$ is called the {\it $p$-th relative Hodge sheaf $\K_{Y}^{p}$} of $X$ with respect to $Y$ (Definition \ref{rhs}).
Via studying explicitly relations of relative Hodge sheaves under blow-ups,
we derive the main result.

\begin{thm}\label{mainthm}
Let $X$ be an $n$-dimensional smooth proper variety over an algebraically closed field $k$ of arbitrary characteristic,
$\iota:Z\hookrightarrow X$ a smooth closed subvariety of codimension $c\geq 2$
and $\pi: \tilde{X}\longrightarrow X$ the blow-up of $X$ along $Z$ with the exceptional divisor $E$.
For any locally free sheaf $\V$ on $X$, the following statements are true:
\begin{itemize}
  \item [(i)]For the cohomology of relative Hodge sheaves $\K_{Z}^{p}$ and $\K_{E}^{p}$,
the pullback of regular differential forms induces a natural isomorphism:
\begin{equation}\label{icrhs}
\pi^{\#}:
H^{q}(X, \K_{Z}^{p}\otimes \V)
\stackrel{\simeq}\longrightarrow
H^{q}(\tilde{X}, \K_{E}^{p}\otimes \pi^{\ast}\V),\;\; \textrm{for any }\, 0\leq p,q\leq n.
\end{equation}
\item [(ii)]There exists an isomorphism
\begin{equation}\label{bundle-blowup-formula}
H^{q}(\tilde{X}, \Omega_{\tilde{X}}^{p}\otimes \pi^{\ast}\V)
\cong
H^{q}(X,\Omega_{X}^{p}\otimes \V)
\oplus
\bigoplus_{i=1}^{c-1} H^{q-i}(Z, \Omega_{Z}^{p-i} \otimes \iota^{\ast} \V)
\end{equation}
for any $0\leq p,q\leq n$.
\end{itemize}
\end{thm}

In \cite[Theorem 1.2]{RYY19b}, using a notion of relative Dolbeault sheaves and the Dolbeault resolutions,
we proved the blow-up formula \eqref{bundle-blowup-formula} for compact complex manifolds.
Furthermore, if $X$ is a smooth proper variety over $\mathbb{C}$,
then the isomorphism \eqref{bundle-blowup-formula} can be obtained by Serre's GAGA and \cite[Theorem 1.2]{RYY19b};
hence, the characteristic zero case can be handled by the Lefschetz principle.
On a smooth variety $X$ over an algebraically closed field $k$ of arbitrary characteristic,
there is no Dolbeault resolution of $\Omega_{X}^{p}$ analogous to that on complex manifolds
since the Zariski topology is coarser than the complex topology.
So the proof in \cite{RYY19b} does not hold step-by-step for general smooth varieties;
however, we will see that the basic idea in \cite{RYY19b,YY17} still holds.

In particular, applying Theorem \ref{mainthm} to torsion line bundles provides us with a useful perspective to understand an interesting problem by Esnault--Ogus \cite[Question 2.1]{EO09}, see Lemma \ref{blowup-EO-quest}.
Moreover, if the locally free sheaf $\V$ in Theorem \ref{mainthm} is the structure sheaf,
then we have the blow-up formula for $(p,q)$-Hodge cohomology which is implicitly contained in \cite{Gros85,AZ17}.
Namely, there exists an isomorphism
\begin{equation}\label{Hodge-bwf}
H^{q}(\tilde{X}, \Omega_{\tilde{X}}^{p})
\cong
H^{q}(X,\Omega_{X}^{p})
\oplus
\bigoplus_{i=1}^{c-1} H^{q-i}(Z, \Omega_{Z}^{p-i})
\end{equation}
of Hodge cohomology.

Similarly, we have the {\it Hochschild--Kostant--Rosenberg spectral sequence} (for short {\it HKR spectral sequence}):
\begin{equation*}
E_{2}^{p, q}=H^{q}(X, \Omega_{X}^{p})\Longrightarrow \mathrm{HH}_{p-q}(X),
\end{equation*}
where the differential $d_{r}$ has bi-degree $(r-1, r)$ and $\mathrm{HH}_{\bullet}(X)$ is the Hochschild homology of $X$.
In characteristic zero, the HKR spectral sequence is known to degenerate at $E_{2}$ for smooth proper varieties.
The HKR spectral sequence also degenerates at $E_{2}$ over $k$ of characteristic $\mathrm{char}(k)\geq \dim\, X$ by Yekutieli \cite{Yek02} and Antieau--Vezzosi \cite{AV17}.
However, when $\dim\, X> \mathrm{char}(k)>0$,
Antieau--Bhatt--Mathew \cite{ABM19} showed that the HKR spectral sequence does not generally degenerate at $E_{2}$ (cf. Antieau--Bragg \cite{AB19}).
Moreover,
here arises a similarly natural

\begin{prob}\label{questin}
 Does the blow-up invariance of the $E_{2}$-degeneracy of the HKR spectral sequence hold for smooth proper varieties over $k$ of positive characteristic?
\end{prob}

Based on Orlov's blow-up formula \cite{Orl93} for derived categories and the decomposition of Hochschild cohomology \cite{Kuz09} under semiorthogonal decompositions,
as a direct corollary of \eqref{Hodge-bwf}, we confirm Problem \ref{questin} as:

\begin{thm}[= Corollary \ref{HKR-blowup}]\label{thm2}
With the same setting as in Theorem \ref{mainthm},
the $E_{2}$-degeneracy of the HKR spectral sequence holds for $\tilde{X}$ if and only if it holds for $X$ and $Z$.
\end{thm}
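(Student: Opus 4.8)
The plan is to reduce the statement to a comparison of dimensions, combining the Hodge cohomology blow-up formula of Corollary~\ref{mainthm2} with the analogous additive blow-up formula for Hochschild homology. Throughout, for a smooth proper variety $W$ write $h^{p,q}(W)=\dim_{k}H^{q}(W,\Omega_{W}^{p})$, and set $\sigma_{m}(W)=\sum_{p-q=m}h^{p,q}(W)$ for the total dimension of the $E_{2}$-page of the HKR spectral sequence of $W$ in total degree $m$.

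First I would record the standard degeneration criterion. Since $W$ is smooth and proper, all the groups $H^{q}(W,\Omega_{W}^{p})$ and $\mathrm{HH}_{m}(W)$ are finite-dimensional, the HKR spectral sequence converges, and its $E_{\infty}$-terms are the graded pieces of a finite filtration on $\mathrm{HH}_{p-q}(W)$. Hence $\sigma_{m}(W)\geq \dim_{k}\mathrm{HH}_{m}(W)$ for every $m$, with equality for all $m$ if and only if $E_{2}^{p,q}=E_{\infty}^{p,q}$ for all $(p,q)$, that is, if and only if the HKR spectral sequence of $W$ degenerates at $E_{2}$. Indeed, since each $E_{2}$-term dominates the corresponding $E_{\infty}$-term, equality of the total dimensions in a fixed degree $m$ forces term-by-term equality; and term-by-term equality for all $(p,q)$ forces every higher differential to vanish.

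Next I would invoke the two blow-up formulas. On the Hochschild side, Orlov's semiorthogonal decomposition $D^{b}(\tilde{X})=\langle D^{b}(X),D^{b}(Z),\dots,D^{b}(Z)\rangle$, with $c-1$ copies of $D^{b}(Z)$, together with the additivity of Hochschild homology over semiorthogonal decompositions (Kuznetsov), yields
\[
\dim_{k}\mathrm{HH}_{m}(\tilde{X})=\dim_{k}\mathrm{HH}_{m}(X)+(c-1)\,\dim_{k}\mathrm{HH}_{m}(Z)
\]
for every $m$; the fully faithful embeddings are induced by Fourier--Mukai kernels and therefore preserve the grading, so no degree shifts appear. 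On the Hodge side, Corollary~\ref{mainthm2} gives $h^{p,q}(\tilde{X})=h^{p,q}(X)+\sum_{i=1}^{c-1}h^{p-i,q-i}(Z)$. Summing over $p-q=m$, and noting that the substitution $(p,q)\mapsto(p-i,q-i)$ preserves $p-q=m$ and permutes the range of nonzero Hodge numbers of $Z$ bijectively, I obtain the matching totals
\[
\sigma_{m}(\tilde{X})=\sigma_{m}(X)+(c-1)\,\sigma_{m}(Z)
\]
for every $m$.

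Finally I would subtract the two displayed identities. Setting $\delta_{m}(W)=\sigma_{m}(W)-\dim_{k}\mathrm{HH}_{m}(W)\geq 0$, they give $\delta_{m}(\tilde{X})=\delta_{m}(X)+(c-1)\,\delta_{m}(Z)$. As $c-1\geq 1$ and every $\delta_{m}$ is nonnegative, $\delta_{m}(\tilde{X})=0$ for all $m$ if and only if $\delta_{m}(X)=0$ and $\delta_{m}(Z)=0$ for all $m$. By the degeneration criterion, this is precisely the assertion that the HKR spectral sequence of $\tilde{X}$ degenerates at $E_{2}$ if and only if it does so for both $X$ and $Z$. The only genuine input beyond this bookkeeping is the Hochschild blow-up formula with its correct, unshifted grading in positive characteristic, which I expect to be the main obstacle; everything else is the dimension count fed by Corollary~\ref{mainthm2}.
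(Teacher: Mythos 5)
Your proposal is correct and follows essentially the same route as the paper: the paper likewise combines the unshifted Hochschild homology blow-up formula $\mathrm{HH}_{l}(\tilde{X})\cong\mathrm{HH}_{l}(X)\oplus\mathrm{HH}_{l}(Z)^{\oplus(c-1)}$ (obtained from Orlov's semiorthogonal decomposition and Kuznetsov's additivity, its Lemma~\ref{Hochschild-blowup}) with the Hodge blow-up formula of Corollary~\ref{mainthm2}, and then concludes by exactly your nonnegative-defect bookkeeping, i.e.\ subtracting the two identities degree by degree and using that each term $\dim\mathrm{HH}_{l}-\sum_{p-q=l}h^{q}(\cdot,\Omega^{p})$ is $\leq 0$ with equality characterizing $E_{2}$-degeneracy.
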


Moreover, the blow-up invariance \cite[Corollary 2.9.(1)]{AZ17} of the $E_{1}$-degeneracy of Hodge--de Rham spectral sequence \eqref{Hodge-to-de Rham} is also a direct result of \eqref{Hodge-bwf} and the blow-up formula \cite[Corollary 2.8.(3)]{AZ17} of algebraic de Rham cohomology.
In particular, based on the examples by Antieau--Bhatt--Mathew, Theorem \ref{thm2} enables us to construct new examples of smooth proper varieties satisfying the non-degeneracy of the HKR spectral sequence at $E_{2}$-page; see Remark \ref{construct-example} and analogous Remark \ref{rem5.25}.

\subsection{Strategy of the proof}
We outline the basic idea of the proof for Theorem \ref{mainthm} as follows.
First of all, we introduce a notion of relative Hodge sheaves $\K_{Z}^{p}$ associated to a pair $(X, Z)$.
Secondly, for the blow-up pairs $(\tilde{X}, E)$ and $(X, Z)$
with $E$ the exceptional divisor,
we shall establish a natural commutative ladder of cohomology:
\begin{equation*}
\xymatrix@C=0.3cm{
\cdots\ar[r]^{} & H^{q}(X, \K^{p}_{Z}\otimes \V)  \ar[d]_{\pi^{\#}} \ar[r]^{} &H^{q}(X, \Omega_{X}^{p}\otimes \V) \ar[d]_{\pi^{\#}} \ar[r]^{} & H^{q}(Z,  \Omega_{Z}^{p}\otimes \iota^{\ast}\V) \ar[d]_{\rho^{\#}} \ar[r]^{} & H^{q+1}(X, \K^{p}_{Z}\otimes \V)\ar[d]_{\pi^{\#}}\ar[r]^{} & \cdots\\
 \cdots\ar[r] & H^{q}(\tilde{X},\K^{p}_{E}\otimes \pi^{\ast}\V) \ar[r]^{} &
  H^{q}(\tilde{X}, \Omega_{\tilde{X}}^{p}\otimes \pi^{\ast}\V) \ar[r]^{} &
  H^{q}(E,  \Omega_{E}^{p}\otimes \tilde{\iota}^{\ast}\pi^{\ast}\V)\ar[r]^{} &
  H^{q+1}(\tilde{X},\K^{p}_{E}\otimes \pi^{\ast}\V) \ar[r] &\cdots. }
\end{equation*}
Furthermore, in the above diagram,
the first and fourth column morphisms
\begin{equation*}
\pi^{\#}:
H^{\bullet}(X, \K_{Z}^{p}\otimes\V)
\longrightarrow
H^{\bullet}(\tilde{X}, \pi^{-1}(\K_{Z}^{p}\otimes\V))
\longrightarrow
H^{\bullet}(\tilde{X}, \K_{E}^{p}\otimes \pi^{\ast}\V)
\end{equation*}
are isomorphisms,  proved in Theorem \ref{key-tech-prop} as an important property of relative Hodge sheaves,
and the second one is injective (Lemma \ref{injective-lem}).
Hence, the third one is injective by the Four Lemma.
Finally, Theorem \ref{mainthm} follows from basic homological algebra and the projective bundle formula of Hodge cohomology of locally free sheaves.

\subsection{Related works}
This is a continuity of our previous works on the blow-up formulae of Dolbeault, Bott--Chern and twisted de Rham cohomologies (\cite{RYY19a,RYY19b,YY17,CY19}).
In \cite{RYY19a,YY17},
from birational point of view,
we tried to understand the birational invariance of the
$\partial\bar{\partial}$-Lemma and the $E_{1}$-degeneracy of \eqref{Hodge-to-de Rham} for compact complex manifolds.
To obtain the blow-up invariance of these two properties, we develop the blow-up formulae of Bott--Chern and Dolbeault cohomologies.
As a consequence,
we obtain that they are birational properties of compact complex threefolds and fourfolds, respectively, by applying the weak factorization theorem \cite{AKMW02,Wlo03}.
Prior to the relative Hodge sheaves,
the notion of {\it relative Dolbeault sheaves} for a pair of a complex manifold and its closed complex submanifold has been introduced in \cite{RYY19b,YY17} and plays a dominant role in \cite{RYY19b,YY17,CY19}.

Before that,
based upon the detailed study of Leray spectral sequences,
the blow-up formula of \'{e}tale cohomology $H_{\mathrm{et}}^{i}(-, \Z_{l})$ on smooth schemes has been obtained in \cite[XVIII, Theorem 2.2.2]{DK73}.
It is worth noticing that the explicit calculation of $R^{i}\pi_{\ast}\Z_{l}$ which is similar to Lemma \ref{key-iso} was also used;
the interested readers may refer to the proof of \cite[Theorem 10]{Ste18}
and \cite[Appendix B]{RYY19b} for a comparison.
Afterwards, using a different method, Barbieri-Viale \cite{Bar97} obtained the blow-up formula of twisted cohomology theory in the sense of Bloch--Ogus \cite{BO74},
e.g., \'{e}tale cohomology, Deligne--Beilinson cohomology
and algebraic de Rham cohomology in characteristic zero.

\subsection{Notation and conventions}
For simplicity, we always assume that $k$ is an algebraically closed field of arbitrary characteristic.
The experiment shows that our arguments are likely to hold over any field of arbitrary characteristic.

Throughout this paper,
a {\it variety} is an integral separated scheme of finite type over $k$ and a {\it locally free sheaf} is of finite rank.
Let $X$ be a smooth proper variety and $\V$ a locally free sheaf on $X$.
We fix some notations for later use:
\begin{itemize}
\item[--] $\CO_{X}$ $=$ the structure sheaf of $X$;

\item[--] $\Omega_{X}$ (or $\Omega_{X}^{1}$) $=$  the cotangent sheaf of $X$;

\item[--] $\Omega_{X}^{p}:=\wedge^{p}\Omega_{X}$ $=$  the sheaf of regular differential $p$-forms on $X$;

\item[--] $\omega_{X}:=\Omega_{X}^{\dim\,X}$ $=$  the canonical sheaf of $X$;

\item[--] $\F(\V):=\F \otimes\V$ $=$  the tensor over $\CO_{X}$,
                           where $\F$ is a sheaf of $\CO_{X}$-module;

\item[--] $h^{l}(X, \F):=\dim\, H^{l}(X, \F)$ $=$  the dimension as $k$-vector space;

\item[--] $\mathbb{P}(\E):=\mathrm{Proj}(\mathrm{Sym} \E^{\vee})$ $=$  the projective bundle of a locally free sheaf $\E$.
\end{itemize}


\section{Preliminaries}

In this section, we give a rapid review on some basic results on sheaf cohomology theory (such as Iversen \cite[II. 7]{Ive86} and Kashiwara--Schapira \cite[\S 2.6]{KS94}) and the construction of blow-ups of smooth proper varieties.

\subsection{Sheaf cohomology}
Let
$f:(Y, \mathcal{R}_{Y})\longrightarrow (X, \mathcal{R}_{X})$ be a morphism of ringed spaces.
We denote by $\D^{+}(\mathcal{R}_{X})$ (resp. $\D^{-}(\mathcal{R}_{X})$)
the bounded {\it below} (resp. {\it above}) derived category of $\mathcal{R}_{X}$-modules.
Then one has the following standard functors:

\begin{itemize}
\item[--]
  $Rf_{\ast}: \D^{+}(\mathcal{R}_{Y})\rightarrow \D^{+}(\mathcal{R}_{X})$
  $=$ the right derived functor of the direct image functor $f_{\ast}$;
\item[--]
  $Lf^{\ast}: \D^{-}(\mathcal{R}_{X})\rightarrow \D^{-}(\mathcal{R}_{Y})$ $=$ the left derived functor of the inverse image functor $f^{\ast}$;
\item[--]
  $f^{-1}$ $=$ the topological inverse image functor.
\end{itemize}

Let $X$ and $Y$ be two smooth proper varieties and $f:Y\longrightarrow X$ a morphism between them.
Since the abelian category of $f^{-1}\CO_{X}$-modules has enough injective objects, there exists a right derived functor of the direct image $f_{\ast}$ denoted by
$$
Rf_{\ast}: \D^{+}(f^{-1}\CO_{X})\longrightarrow \D^{+}(\CO_{X}).
$$
Note that the topological inverse image functor $f^{-1}$ is exact.
It naturally extends to a functor on derived categories
$$
f^{-1}: \D^{+}(\CO_{X})\longrightarrow \D^{+}(f^{-1}\CO_{X}),
$$
which is the \emph{left adjoint} of the derived functor $Rf_{\ast}$.
Therefore, for any objects $\E^{\bullet} \in \D^{+}(\CO_{X})$
and  $\F^{\bullet} \in \D^{+}(f^{-1}\CO_{X})$,
there is an isomorphism
\begin{equation}\label{adjoint-iso}
\Hom_{\D^{+}(\CO_{X})}(\E^{\bullet}, Rf_{\ast}\F^{\bullet})
\cong
\Hom_{\D^{+}(f^{-1}\CO_{X})}(f^{-1}\E^{\bullet}, \F^{\bullet})
\end{equation}
which is functorial for $\E^{\bullet}$ and $\F^{\bullet}$.
Due to the naturality of isomorphisms in \eqref{adjoint-iso},
the morphism above gives rise to a natural transformation
\begin{equation}\label{funct-morph}
\Id
\longrightarrow
Rf_{\ast}f^{-1}
\end{equation}
in $\D^{+}(\CO_{X})$ (cf. \cite[(2.6.16)]{KS94}).
In particular, for a distinguished triangle
$$
\xymatrix@C=0.5cm{
\E^{\bullet} \ar[r]& \F^{\bullet} \ar[r]& \mathcal{G}^{\bullet} \ar[r]& \E^{\bullet}[1],
}
$$
in $\D^{+}(\CO_{X})$,
we can construct a morphism of distinguished triangles
\begin{equation}\label{distinguish-commut}
\vcenter{
\xymatrix@C=0.4cm{
 \E^{\bullet} \ar[d]\ar[r]& \F^{\bullet} \ar[d]\ar[r]& \mathcal{G}^{\bullet}  \ar[d]\ar[r]& \E^{\bullet}[1] \ar[d]& \\
Rf_{\ast}f^{-1} \E^{\bullet}  \ar[r]& Rf_{\ast}f^{-1}\F^{\bullet}  \ar[r]&Rf_{\ast}f^{-1}\mathcal{G}^{\bullet} \ar[r]& Rf_{\ast}f^{-1} \E^{\bullet}[1]
}}
\end{equation}
in the derived category $\D^{+}(\CO_{X})$.
Since $R\Gamma(Y, -)=R\Gamma(X, Rf_{\ast}(-))$,
applying the derived functor $R\Gamma(X, -)$ to \eqref{distinguish-commut},
we get a commutative ladder of long exact sequences of hypercohomology groups
\begin{equation}\label{pre-comdiag-1}
\vcenter{
\xymatrix@C=0.4cm{
\cdots\ar[r]^{} & \mathbb{H}^{l}(X,\E^{\bullet})
\ar[d]_{} \ar[r]^{} & \mathbb{H}^{l}(X, \F^{\bullet})
\ar[d]_{} \ar[r]^{} & \mathbb{H}^{l}(X, \mathcal{G}^{\bullet})
\ar[d]_{} \ar[r]^{} & \mathbb{H}^{l+1}(X,\E^{\bullet}) \ar[d]_{} \ar[r]^{} & \cdots \\
\cdots\ar[r] & \mathbb{H}^{l}(Y, f^{-1}\E^{\bullet}) \ar[r]^{} &
\mathbb{H}^{l}(Y, f^{-1}\F^{\bullet}) \ar[r]^{} &
\mathbb{H}^{l}(Y, f^{-1}\mathcal{G}^{\bullet}) \ar[r]^{} &
\mathbb{H}^{l+1}(Y, f^{-1}\E^{\bullet}) \ar[r] & \cdots.}}
\end{equation}

\begin{rem}
Alternatively, one can construct the commutative diagrams \eqref{pre-comdiag-1} and \eqref{final-diagram} below by using the \v{C}ech (hyper)cohomology theory; for instance, see Serre's GAGA \cite[$\S$ 11]{Ser56}.
The advantage of the categorical construction above is to avoid some involved local calculations.
\end{rem}

In complex differential geometry,
each sheaf of holomorphic differential forms on a complex manifold admits a canonical resolution: the Dolbeault resolution.
As a result, by the Dolbeault Theorem,
the pullback of differential forms naturally induces a morphism of Dolbeault cohomology groups.
In algebraic geometry, the regular differential forms on a variety can be considered as the counterpart of differential forms in complex differential geometry.
By contrast, the sheaf of regular differential forms has no analog of the Dolbeault resolution, since the Zariski topology is coarser than the complex topology.
Naturally, given a morphism of smooth proper varieties one may wonder how to define a natural morphism of sheaves of regular differential forms and the induced morphism of their cohomology groups under this morphism.
The rest of this subsection is devoted to explain the induced morphism of sheaves of regular differential forms on smooth varieties (see \eqref{real-pullback}) and the induced morphisms of Hodge cohomology groups (see \eqref{induced-map}).

Assume that $\V$ is a locally free sheaf on $X$ and
let $f:Y\longrightarrow X$ be a morphism of smooth proper varieties.
For the sheaves $\Omega^{p}_{X}$ and $\Omega^{p}_{Y}$,
there is a natural commutative diagram
\begin{equation}\label{topolpullbackmorph}
\vcenter{
\xymatrix@C=0.5cm{
&f^{-1}\Omega_{X}^{p}(\V) \ar[d]^{} \ar[rd]^{\alpha} \\
& f^{-1}f_{\ast}\Omega_{Y}^{p}(f^{\ast}\V) \ar[r] &\Omega_{Y}^{p}(f^{\ast}\V). &
}}
\end{equation}
In the derived category $\D^{+}(\CO_{X})$,
combining \eqref{topolpullbackmorph}
with the functorial morphism \eqref{funct-morph} for $\Omega_{X}^{p}(\V)$ yields a natural morphism
\begin{equation}\label{real-pullback}
f^{\#}:\Omega_{X}^{p}(\V)
\longrightarrow
Rf_{\ast}f^{-1}\Omega_{X}^{p}(\V)
\stackrel{Rf_{\ast}(\alpha)}\longrightarrow
Rf_{\ast}\Omega_{Y}^{p}(f^{\ast}\V),
\end{equation}
which gives rise to a morphism of cohomology groups
\begin{equation}\label{induced-map}
f^{\#}: H^{q}(X, \Omega_{X}^{p}(\V))
\longrightarrow
H^{q}(Y, f^{-1}\Omega_{X}^{p}(\V))
\longrightarrow
H^{q}(Y, \Omega_{Y}^{p}(f^{\ast}\V)).
\end{equation}

\begin{rem}
If $f:Y\longrightarrow X$ is a morphism of smooth proper varieties and $\V$ is a locally free sheaf on $X$,
then one can define a similar natural morphism
$$
\beta:f^{\ast}\Omega^{p}_{X}(\V)\longrightarrow\Omega^{p}_{Y}(f^{\ast}\V)
$$
of $\CO_{Y}$-modules, and hence there is a corresponding morphism of Hodge cohomology groups.
\end{rem}


\subsection{Blow-ups}

Let $X$ be a smooth proper variety of dimension $n\geq 2$ and $\iota:Z\hookrightarrow X$ a smooth closed subvariety of codimension $c\geq 2$.
Denote by $\mathcal{I}_{Z}\subset \CO_{X}$ the coherent sheaf of ideals corresponding to $Z$.
Then the {\it blow-up $\tilde{X}$ of $X$ along $Z$} is defined to be
$$
\tilde{X}:=\mathrm{Bl}_{Z}X=\mathbf{Proj}\biggl(\bigoplus_{d\geq0} \mathcal{I}_{Z}^{d}\biggr),
$$
where $\mathcal{I}_{Z}^{d}$ is the $d$-th power of the ideal $\mathcal{I}$
and $\mathcal{I}^{0}:=\CO_{X}$.
By definition, there is a natural morphism $\pi:\tilde{X}\longrightarrow X$
such that
$\pi: \pi^{-1}(U)\longrightarrow U$
is biregularly isomorphic, where $U:=X-Z$.
We say that $\pi$ is the {\it blow-up morphism} and $E:=\pi^{-1}(Z)$ is the {\it exceptional divisor}.
Moreover, there is a commutative diagram
\begin{equation}\label{blow-up-diagram}
\vcenter{
\xymatrix@C=2cm{
E \ar[d]_{\rho} \ar@{^{(}->}[r]^{\tilde{\iota}} & \tilde{X}\ar[d]^{\pi}\\
 Z \ar@{^{(}->}[r]^{\iota} & X.}}
\end{equation}
Let $\mathcal{N}_{Z/X}$ be the normal bundle of rank $c$ of $Z$ in $X$.
In fact, the exceptional divisor $E$ is equal to the projectivization of $\mathcal{N}_{Z/X}$, i.e.,
$E=\mathbb{P}(\mathcal{N}_{Z/X})$.
Moreover, the morphism $\rho: E\longrightarrow Z$ is the projective fibration of $E$ over the center $Z$ and the following basic properties hold:
\begin{enumerate}
\item [(i)] the new variety $\tilde{X}$ is a smooth proper variety;
\item [(ii)] the blow-up morphism $\pi$ is projective;
\item [(iii)] $\CO_{\tilde{X}}(-E)$ is very ample relative to $\pi$
and $\CO_{\tilde{X}}(-E)|_{E}\cong \CO_{E}(1)$, where $\CO_{E}(1)$ is the Grothendieck line bundle of $E$.
\end{enumerate}

Now we consider the (higher) direct image of the structure sheaf under the blow-up morphism.
There hold $\pi^{\#}: \CO_{X}\stackrel{\simeq}\longrightarrow \pi_{\ast}\CO_{\tilde{X}}$
and $R^{i}\pi_{\ast}\CO_{\tilde{X}}=0$ for $i>0$ (cf. \cite[Chapter V, Proposition 3.4]{Har77}).
As a result, for a locally free sheaf of $\mathcal{O}_{X}$-modules $\V$,
there hold isomorphisms
$$
H^{q}(X, \V)\cong H^{q}(\tilde{X}, \pi^{\ast}\V)
$$
for any $q\geq0$.
In a more general setting,
if $f:\tilde{X}\longrightarrow X$ is a projective birational morphism of smooth varieties,
then we also have
$\CO_{X}\stackrel{\simeq}\longrightarrow f_{\ast}\CO_{\tilde{X}}$ and $R^{i}f_{\ast}\CO_{\tilde{X}}=0$ for any $i>0$ (see \cite{Hir64} and \cite{CR11}).

A natural problem comes to mind: {\it What about the sheaf of regular differential forms of degree $p$ such that $0<p<n$?}
In fact, one can show that there holds the isomorphism
$$
H^{0}(X, \Omega_{X}^{p}(\V))\cong H^{0}(\tilde{X}, \Omega_{\tilde{X}}^{p}(\pi^{\ast}\V))
$$
for any $0<p<n$.
Hence, the Hodge cohomologies of types $(p,0)$ and $(0,q)$ are invariant under the blow-up morphism.
However, for the general types the invariance of Hodge cohomology does not hold anymore.
The reason lies in the fact that the center has some contributions to the Hodge cohomology of the blowing up variety.

\begin{example}
Here is a simple example from \cite[Chapter V, Exercise 5.3]{Har77}.
Let $X$ be a smooth proper surface and
$\pi:\tilde{X}\longrightarrow X$ the blow-up of $X$ at a closed point $p\in X$.
For any locally free sheaf  $\V$ of rank $r$ on $X$, there exists a short exact sequence of sheaves
\begin{equation}\label{surface-case-}
\xymatrix@C=0.5cm{
0\ar[r] & \pi^{\ast}\Omega_{X}^{1}(\V)
\ar[r] &  \Omega_{\tilde{X}}^{1}(\pi^{\ast}\V)
\ar[r] &  \Omega_{\tilde{X}/X}^{1}(\pi^{\ast}\V)
\ar[r] & 0.
}
\end{equation}
First, for any $l\geq0$, we claim the following isomorphism as $k$-vector spaces:
\begin{equation}\label{pi*-iso}
H^{l}(\tilde{X},\pi^{\ast}\Omega^{1}_{X}(\V))\cong
H^{l}(X,\Omega^{1}_{X}(\V)).
\end{equation}
Consider the direct image (resp. higher direct images) of the sheaf $\pi^{\ast}\Omega^{1}_{X}(\V)$ along $\pi$.
From the projection formula we have
$
\pi_{\ast}(\pi^{\ast}\Omega^{1}_{X}(\V))
\cong \Omega^{1}_{X}(\V)
$
and
$
R^{i}\pi_{\ast}(\pi^{\ast}\Omega^{1}_{X}(\V))
= 0
$
for any $i\geq1$.
Using the Leray spectral sequence for $\pi_{\ast}(\pi^{\ast}\Omega^{1}_{X}(\V))$, it is a direct consequence that \eqref{pi*-iso} holds.
As $\tilde{X}$ is the pointed blow-up of $X$, we have $\Omega_{\tilde{X}/X}^{1}\cong  \tilde{\iota}_{\ast}\Omega_{E}^{1}$.
Observe that
$\Omega_{\tilde{X}/X}^{1}(\pi^{\ast}\V)\cong  \tilde{\iota}_{\ast}(\Omega_{E}^{1})^{\oplus r}$
from the projection formula.
Hence, we get
\begin{equation}\label{1-1}
H^{l}(\tilde{X},\Omega_{\tilde{X}/X}^{1}(\pi^{\ast}\V))\cong
H^{l}(\tilde{X},\tilde{\iota}_{\ast}(\Omega_{E}^{1})^{\oplus r})
\cong
H^{l}(\mathbb{P}^1, (\Omega_{\mathbb{P}^1}^{1})^{\oplus r})
=
 \left\lbrace
           \begin{array}{c l}
             k^{\oplus r}, & \text{$l=1$};\\
             0, & \text{$l\neq1$},
           \end{array}
         \right.
\end{equation}
since $E\cong \mathbb{P}^1$.
Consider the long exact sequence of sheaf cohomology groups from \eqref{surface-case-}.
By \eqref{pi*-iso}, and \eqref{1-1}, we obtain
a short exact sequence of $k$-vector spaces
$$
\xymatrix{
  0 \ar[r] & H^{1}(X,\Omega^{1}_{X}(\V))
  \ar[r]^{} & H^{1}(\tilde{X},\Omega^{1}_{\tilde{X}}(\pi^{\ast}\V)) \ar[r]^{} & k^{\oplus r} \ar[r] & 0 }
$$
and hence the isomorphism
$$
H^{1}(\tilde{X}, \Omega_{\tilde{X}}^{1}(\pi^{\ast}\V))\cong H^{1}(X, \Omega_{X}^{1}(\V))\oplus k^{\oplus r}.
$$
\end{example}

\begin{rem}
Suppose that $X$ is a smooth projective surface over a field $K$ (not necessarily algebraically closed).
Consider the blow-up $\pi: \tilde{X}\longrightarrow X$ of $X$ at a closed point $x\in X$.
Similarly, for any locally free sheaf $\V$ of rank $r$ on $X$,
one still has
$$
H^{1}(\tilde{X}, \Omega_{\tilde{X}}^{1}(\pi^{\ast}\V))\cong H^{1}(X, \Omega_{X}^{1}(\V))\oplus K(x)^{\oplus r},
$$
where $K(x):=\CO_{X,\, x}/\mathfrak{m}_{x}$ is the residue field of $x$ on $X$.
\end{rem}

In general, from \eqref{blow-up-diagram} we have the following important lemma for the proof of the Hodge blow-up formula later.

\begin{lem}\label{iso1-2-3}
For any $0\leq p\leq n$, we have:
\begin{itemize}
  \item [(i)]
  $\pi^{\#}: \Omega^{p}_{X}\stackrel{\simeq}\longrightarrow \pi_{\ast}\Omega^{p}_{\tilde{X}}$;
  \item [(ii)]
  $\rho^{\#}: \Omega^{p}_{Z}\stackrel{\simeq}\longrightarrow \rho_{\ast}\Omega^{p}_{E}$;
  \item [(iii)]\label{key-iso}
  $\tilde{\iota}^{\#}: R^{i}\pi_{\ast}\Omega^{p}_{\tilde{X}} \stackrel{\simeq}\longrightarrow \iota_{\ast}R^{i}\rho_{\ast}\Omega^{p}_{E}$ for any $i\geq 1$.
\end{itemize}
\end{lem}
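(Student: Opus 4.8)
The plan is to treat the three isomorphisms in turn, viewing (iii) as the genuine crux while (i) and (ii) are comparatively soft. For (i), I would observe that $\Omega^p_X$ and $\pi_\ast\Omega^p_{\tilde X}$ are both torsion-free coherent $\CO_X$-modules that restrict to the \emph{same} sheaf over $U:=X-Z$, where $\pi$ is an isomorphism; hence the natural map $\pi^{\#}$ is injective. For surjectivity I would exploit that $\Omega^p_X$ is locally free and $\operatorname{codim}Z\geq 2$: a section of $\pi_\ast\Omega^p_{\tilde X}$ over $V$ is a regular $p$-form on $\pi^{-1}(V)$ whose restriction to $\pi^{-1}(V\cap U)\cong V\cap U$ is a section of $\Omega^p_X$ over $V-(Z\cap V)$, which extends uniquely across the codimension-$\geq 2$ locus $Z\cap V$ by algebraic Hartogs extension for locally free sheaves. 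The extension and the original form agree on the dense open $\pi^{-1}(V\cap U)$, so they coincide by torsion-freeness of $\Omega^p_{\tilde X}$; this produces the inverse of $\pi^{\#}$.

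For (ii), since $\rho:E=\mathbb{P}(\mathcal{N}_{Z/X})\to Z$ is a $\mathbb{P}^{c-1}$-bundle, I would take the $p$-th exterior power of the relative cotangent (Euler) sequence $0\to\rho^\ast\Omega^1_Z\to\Omega^1_E\to\Omega^1_{E/Z}\to 0$ to obtain a finite filtration of $\Omega^p_E$ with graded quotients $\rho^\ast\Omega^j_Z\otimes\Omega^{p-j}_{E/Z}$ for $0\leq j\leq p$. Applying $\rho_\ast$ and using the projection formula together with the relative Bott vanishing $\rho_\ast\CO_E\cong\CO_Z$ and $\rho_\ast\Omega^{m}_{E/Z}=0$ for $m\geq 1$, every graded piece except the bottom one ($j=p$) has vanishing $\rho_\ast$. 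A short induction up the filtration through the associated long exact sequences then identifies $\rho_\ast\Omega^p_E$ with $\rho_\ast\rho^\ast\Omega^p_Z\cong\Omega^p_Z$, and one checks this identification is exactly $\rho^{\#}$.

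The substantial statement is (iii). First I would record that $\pi$ is an isomorphism over $U$, so $R^i\pi_\ast\Omega^p_{\tilde X}$ is supported on $Z$ for $i\geq 1$, and that the restriction-of-forms morphism $\Omega^p_{\tilde X}\to\tilde\iota_\ast\Omega^p_E$ induces, via $\pi\circ\tilde\iota=\iota\circ\rho$ and the affineness of $\iota$ (so $R^{>0}\iota_\ast=0$ and $R^i(\iota\rho)_\ast=\iota_\ast R^i\rho_\ast$), precisely the natural map $\tilde\iota^{\#}:R^i\pi_\ast\Omega^p_{\tilde X}\to\iota_\ast R^i\rho_\ast\Omega^p_E$. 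To prove this is an isomorphism I would pass to the formal completion along $Z$ and invoke the theorem on formal functions, identifying $(R^i\pi_\ast\Omega^p_{\tilde X})^{\wedge}$ with $\varprojlim_m H^i\bigl(E,\,\Omega^p_{\tilde X}\otimes\CO_{\tilde X}/\mathcal{I}_E^{m+1}\bigr)$. Using the conormal sequence $0\to\CO_E(1)\to\Omega^1_{\tilde X}|_E\to\Omega^1_E\to 0$ together with $\mathcal{I}_E^{m}/\mathcal{I}_E^{m+1}\cong\CO_E(m)$ (recall $\CO_{\tilde X}(-E)|_E\cong\CO_E(1)$), I would filter $\Omega^p_{\tilde X}$ restricted to the infinitesimal neighborhoods by subquotients of the shape $\Omega^a_E\otimes\CO_E(b)$, the "normal" layers carrying strictly positive twists $b\geq 1$.

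The relative Bott formula $R^i\rho_\ast\Omega^j_{E/Z}\cong\CO_Z$ for $i=j$ and $=0$ otherwise, applied to these twisted layers along the fibres of $\rho$, is then designed to make the positively twisted contributions either vanish or cancel in the inverse limit, leaving only the untwisted part, which reassembles to $R^i\rho_\ast\Omega^p_E\cong\Omega^{p-i}_Z$; transporting this back through formal functions yields the isomorphism, and tracing the maps confirms it is realized by $\tilde\iota^{\#}$. I expect the main obstacle to lie entirely in this last part: organizing the filtration of $\Omega^p_{\tilde X}$ near $E$ compatibly with the powers of $\CO_E(-E)|_E$, controlling the twisted layers in the limit, and verifying that the surviving term is matched with $R^i\rho_\ast\Omega^p_E$ through the \emph{natural} map rather than merely abstractly isomorphic to it. As a cleaner route I would first try to replace the formal-functions bookkeeping by the formal/étale-local triviality of the tubular neighborhood, so that $\tilde X$ near $E$ is modelled on the total space of $\CO_E(-1)$ over $E$; this makes each layer explicit and reduces the whole computation to the relative Bott vanishing on the projective bundle $\rho$.
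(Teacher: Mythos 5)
Your proposal is essentially correct, and for the crucial part (iii) it takes a genuinely different route from the paper. Parts (i) and (ii) agree with the paper in substance: the paper also proves (i) by the Algebraic Hartogs Theorem, and handles (ii) by local trivialization of the projective bundle (following its earlier work), whereas you filter $\Omega^p_E$ by exterior powers of the relative cotangent sequence --- a minor, equally valid variant. For (iii), the paper never invokes the theorem on formal functions: it twists the structure sequence of $E$ by $\Omega^p_{\tilde X}\otimes\CO_{\tilde X}(m)$ and the wedge of the conormal sequence by $\CO_E(m)$, proves via K\"unneth plus the Bott formula that $R^i\pi_{\ast}(\Omega^p_{\tilde X}\otimes\tilde\iota_{\ast}\CO_E(m))=0$ for $i,m\geq 1$, and then kills $R^i\pi_{\ast}(\Omega^p_{\tilde X}\otimes\CO_{\tilde X}(1))$ by \emph{relative Serre vanishing and descending induction}; the $m=0$ sequence then yields $R^i\pi_{\ast}\Omega^p_{\tilde X}\cong R^i\pi_{\ast}(\Omega^p_{\tilde X}\otimes\tilde\iota_{\ast}\CO_E)\cong\iota_{\ast}R^i\rho_{\ast}\Omega^p_E$, with naturality of $\tilde\iota^{\#}$ manifest at every step. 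Your route instead completes along $E$, filters the infinitesimal neighborhoods by $\mathcal{I}_E^j/\mathcal{I}_E^{j+1}\cong\CO_E(j)$, and lets Bott vanishing make the tower constant; this is closer in spirit to Hartshorne's treatment of surface blow-ups (and to the paper's own Lemma 2.3), at the cost of the bookkeeping you yourself flag: support of $R^i\pi_{\ast}\Omega^p_{\tilde X}$ on $Z$ (so completion changes nothing), eventual constancy of the inverse system, and compatibility of the formal-functions isomorphism with $\tilde\iota^{\#}$. Both arguments ultimately rest on the same characteristic-free input, namely $R^i\rho_{\ast}(\Omega^j_{E/Z}\otimes\CO_E(b))=0$ for $i\geq 1$, $b\geq 1$.

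Two cautions. First, the Bott statement you quote ($R^i\rho_{\ast}\Omega^j_{E/Z}\cong\CO_Z$ for $i=j$, else $0$) is the \emph{untwisted} formula and does not by itself kill your positively twisted layers; what you need is precisely the twisted vanishing above, i.e.\ $H^i(\mathbb{P}^{c-1},\Omega^j_{\mathbb{P}^{c-1}}(b))=0$ for $i\geq1$, $b\geq1$, which is what the paper's Claim uses. Second, your suggested ``cleaner route'' via formal or \'etale-local triviality of a tubular neighborhood is not available in the algebraic category (the formal neighborhood of $E$ in $\tilde X$ need not be that of the zero section of its normal bundle); fortunately it is inessential, since your main argument only uses $\mathcal{I}_E^m/\mathcal{I}_E^{m+1}\cong\CO_E(m)$, which holds for any effective Cartier divisor.
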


This third isomorphism in Lemma \ref{iso1-2-3} is first addressed by Gros \cite[Chapter IV, Theorem 1.2.1]{Gros85} for $i>1$ over an arbitrary base scheme.
In their paper \cite[(3.3) Proposition]{GNA02}, Guill\'{e}n--Navarro Aznar improved it to $i\geq1$ in characteristic zero.
It came as a surprise to us that it still holds in positive characteristic.
The main reason why this is possible is that the proof of this isomorphism is essentially based upon some principles from sheaf cohomology theory which are independent on the ground field. For reader's convenience, we present a complete proof here but do not claim any originality.

\begin{proof}[Proof of Lemma \ref{key-iso}]
Based on the Algebraic Hartogs Theorem \cite[Chapter II, Proposition 6.3A]{Har77}
and local trivialization of projective bundles,
the proofs of the assertions (i) and (ii) are quite similar to \cite[Lemma 4.1.(i)-(ii)]{RYY19b}.

Now we consider the assertion (iii).
First, note that there exist two standard short exact sequences associated with the exceptional divisor $E$ in $\tilde{X}$:
\begin{equation}\label{structure-seq}
\xymatrix@C=0.5cm{
  0 \ar[r] &  \CO_{\tilde{X}}(-E)  \ar[r] &  \CO_{\tilde{X}} \ar[r] & \tilde{\iota}_{\ast}\CO_{E}  \ar[r] & 0 }
\end{equation}
and
\begin{equation}\label{normal-seq}
\xymatrix@C=0.5cm{
  0 \ar[r] & \mathcal{N}_{E/\tilde{X}}^{\vee} \ar[r] & \tilde{\iota}^{\ast}\Omega_{\tilde{X}} \ar[r] & \Omega_{E} \ar[r] & 0, }
\end{equation}
where $\mathcal{N}_{E/\tilde{X}}^{\vee}\cong \CO_{E}(1)\cong \tilde{\iota}^{\ast}\CO_{\tilde{X}}(-E)$.
 Write $\CO_{\tilde{X}}(1):=\CO_{\tilde{X}}(-E)$ and thus $\CO_{E}(1)=\tilde{\iota}^{\ast}\CO_{\tilde{X}}(1)$.
Twisting \eqref{structure-seq} with $\Omega^{p}_{\tilde{X}}\otimes \CO_{\tilde{X}}(m)$ gives rise to a short exact sequence
\begin{equation}\label{twist-structure-seq}
\xymatrix@C=0.5cm{
  0 \ar[r] & \Omega^{p}_{\tilde{X}}
  \otimes \CO_{\tilde{X}}(m+1)  \ar[r] &  \Omega^{p}_{\tilde{X}} \otimes
  \CO_{\tilde{X}}(m) \ar[r] &
  \Omega^{p}_{\tilde{X}}
  \otimes \tilde{\iota}_{\ast} \CO_{E}(m)  \ar[r] & 0. }
\end{equation}
Since $\mathcal{N}_{E/\tilde{X}}$ is an invertible sheaf,
taking $p$-th exterior wedge of \eqref{normal-seq} and then twisting it with $\CO_{E}(m)$,
one gets another short exact sequence
\begin{equation}\label{wedge-normal-seq}
\xymatrix@C=0.5cm{
  0 \ar[r] &  \Omega^{p-1}_{E}
  \otimes \CO_{E}(m+1) \ar[r] & \tilde{\iota}^{\ast}\Omega^{p}_{\tilde{X}}
  \otimes \CO_{E}(m)  \ar[r] & \Omega^{p}_{E}
  \otimes  \CO_{E}(m)\ar[r] & 0.}
\end{equation}
Due to the projection formula, for any $m\geq 0$, one has

\begin{eqnarray}\label{relat-direct-im}
R^{i}\pi_{\ast}(\Omega^{p}_{\tilde{X}}\otimes \tilde{\iota}_{\ast} \CO_{E}(m))
&\stackrel{\simeq}\longrightarrow&
R^{i}\pi_{\ast}(\tilde{\iota}_{\ast} (\tilde{\iota}^{\ast} \Omega^{p}_{\tilde{X}}
\otimes \CO_{E}(m))) \nonumber\\
&\cong& R^{i}(\pi \circ \tilde{\iota})_{\ast} (\tilde{\iota}^{\ast}\Omega^{p}_{\tilde{X}}
\otimes \CO_{E}(m)) \nonumber\\
&\cong& R^{i}(\iota \circ \rho)_{\ast} (\tilde{\iota}^{\ast}\Omega^{p}_{\tilde{X}}
\otimes \CO_{E}(m)) \nonumber \\
&\cong&\iota_{\ast}R^{i}\rho_{\ast} (\tilde{\iota}^{\ast}\Omega^{p}_{\tilde{X}}
\otimes \CO_{E}(m)).
\end{eqnarray}
The second isomorphism in \eqref{relat-direct-im} comes from the fact
$R^{j}\tilde{\iota}_{\ast} (\tilde{\iota}^{\ast} \Omega^{p}_{\tilde{X}}
\otimes \CO_{E}(m))=0$ for any $j>0$ since $\tilde{\iota}$ is a closed immersion, and the Grothendieck spectral sequence
$$
E^{i,j}_{2}=
R^{i}\pi_{\ast}(R^{j}\tilde{\iota}_{\ast} (\tilde{\iota}^{\ast} \Omega^{p}_{\tilde{X}}
\otimes \CO_{E}(m)))\Longrightarrow
R^{i+j}(\pi \circ \tilde{\iota})_{\ast} (\tilde{\iota}^{\ast}\Omega^{p}_{\tilde{X}}
\otimes \CO_{E}(m)).
$$

\begin{claim}\label{highvani-B}
For any $i\geq 1$ and $m\geq 1$,
$R^{i}\pi_{\ast}(\Omega^{p}_{\tilde{X}}\otimes \tilde{\iota}_{\ast} \CO_{E}(m))=0$.
\end{claim}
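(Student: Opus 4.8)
The plan is to reduce Claim \ref{highvani-B} to a relative Bott-type vanishing on the projective bundle $\rho\colon E=\mathbb{P}(\mathcal{N}_{Z/X})\to Z$ and then to feed that vanishing into the long exact sequence attached to \eqref{wedge-normal-seq}. By the chain of isomorphisms \eqref{relat-direct-im} and the exactness of $\iota_{\ast}$, the desired vanishing $R^{i}\pi_{\ast}(\Omega^{p}_{\tilde{X}}\otimes \tilde{\iota}_{\ast}\CO_{E}(m))=0$ is equivalent to the purely relative statement
\[
R^{i}\rho_{\ast}\big(\tilde{\iota}^{\ast}\Omega^{p}_{\tilde{X}}\otimes \CO_{E}(m)\big)=0 \qquad (i\geq 1,\ m\geq 1).
\]
The short exact sequence \eqref{wedge-normal-seq} realizes $\tilde{\iota}^{\ast}\Omega^{p}_{\tilde{X}}\otimes\CO_{E}(m)$ as an extension of $\Omega^{p}_{E}\otimes\CO_{E}(m)$ by $\Omega^{p-1}_{E}\otimes\CO_{E}(m+1)$, both of which are \emph{absolute} differential sheaves on $E$ twisted by a strictly positive power of $\CO_{E}(1)$. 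Consequently, once I know that $R^{i}\rho_{\ast}(\Omega^{j}_{E}\otimes\CO_{E}(l))=0$ for all $i\geq 1$, $l\geq 1$ and $0\leq j\leq\dim E$, the long exact sequence of higher direct images applied to \eqref{wedge-normal-seq} (the outer twists being $m\geq 1$ and $m+1\geq 2$) forces the middle term to vanish, which is exactly the claim.

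It remains to establish this key vanishing for $\Omega^{j}_{E}\otimes\CO_{E}(l)$. First I would peel off the base directions using the relative cotangent sequence of the smooth morphism $\rho$,
\[
0\longrightarrow \rho^{\ast}\Omega^{1}_{Z}\longrightarrow \Omega^{1}_{E}\longrightarrow \Omega^{1}_{E/Z}\longrightarrow 0 .
\]
Taking exterior powers gives a finite filtration of $\Omega^{j}_{E}$ whose successive quotients are $\rho^{\ast}\Omega^{a}_{Z}\otimes\Omega^{j-a}_{E/Z}$ for $0\leq a\leq j$. Twisting by $\CO_{E}(l)$ and using the projection formula (legitimate since $\rho^{\ast}\Omega^{a}_{Z}$ is locally free) yields $R^{i}\rho_{\ast}(\rho^{\ast}\Omega^{a}_{Z}\otimes\Omega^{j-a}_{E/Z}\otimes\CO_{E}(l))\cong \Omega^{a}_{Z}\otimes R^{i}\rho_{\ast}(\Omega^{j-a}_{E/Z}\otimes\CO_{E}(l))$. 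Climbing this filtration with the associated long exact sequences reduces the key vanishing to the genuinely \emph{relative} assertion $R^{i}\rho_{\ast}(\Omega^{b}_{E/Z}\otimes\CO_{E}(l))=0$ for $i\geq 1$ and $l\geq 1$.

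For this relative assertion I would argue fibrewise. The morphism $\rho$ is smooth and proper with fibres $\mathbb{P}^{c-1}$, the sheaf $\Omega^{b}_{E/Z}\otimes\CO_{E}(l)$ is locally free (hence flat over $Z$), and its restriction to each fibre is $\Omega^{b}_{\mathbb{P}^{c-1}}\otimes\CO_{\mathbb{P}^{c-1}}(l)$. The Bott formula gives $H^{i}(\mathbb{P}^{c-1},\Omega^{b}_{\mathbb{P}^{c-1}}(l))=0$ for every $i\geq 1$ as soon as $l\geq 1$; crucially this vanishing is characteristic-free, being deduced from the exterior powers of the Euler sequence $0\to\Omega^{1}_{\mathbb{P}^{c-1}}\to\CO(-1)^{\oplus c}\to\CO\to 0$ together with the cohomology of line bundles on $\mathbb{P}^{c-1}$, all insensitive to the ground field. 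Since every fibre cohomology vanishes in degrees $\geq 1$, cohomology and base change (valid over any Noetherian base and in any characteristic) yields $R^{i}\rho_{\ast}(\Omega^{b}_{E/Z}\otimes\CO_{E}(l))=0$ for $i\geq 1$, and combining the three steps completes the proof.

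The step I expect to be the main obstacle is precisely this passage from fibrewise Bott vanishing to the relative vanishing: one must apply the base change machinery correctly, descending from the top fibre degree $c-1$ via Grothendieck vanishing and Nakayama, and—more importantly—make sure that no Kodaira-type vanishing, which would fail in positive characteristic, enters anywhere. Keeping the whole argument supported only by the Euler sequence, the projection formula, and cohomology and base change is exactly what makes it valid uniformly over $k$ of arbitrary characteristic.
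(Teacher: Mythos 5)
Your proof is correct, and its skeleton matches the paper's: both arguments reduce the claim, via the chain of isomorphisms \eqref{relat-direct-im} and the long exact sequence of higher direct images attached to \eqref{wedge-normal-seq}, to the vanishing of higher direct images under $\rho$ of twisted sheaves of differentials on $E$. Where you genuinely diverge is in how that last vanishing is established. The paper proves $R^{i}\rho_{\ast}(\Omega^{j}_{E}\otimes\CO_{E}(l))=0$ (absolute differentials of $E$) by noting the problem is local over $Z$: it trivializes the projective bundle over an affine open, so that $E$ becomes $\mathrm{Spec}\,A\times\mathbb{P}^{c-1}_{k}$, and then combines the K\"{u}nneth decomposition of $\Omega^{j}_{E}$ on the product, the vanishing of higher cohomology on the affine factor, and Bott vanishing $H^{i}(\mathbb{P}^{c-1},\Omega^{b}_{\mathbb{P}^{c-1}}(l))=0$ for $i\geq1$, $l\geq1$. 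You instead stay global: you filter $\Omega^{j}_{E}$ by the exterior powers of the relative cotangent sequence of the smooth morphism $\rho$, strip off the locally free factors $\rho^{\ast}\Omega^{a}_{Z}$ with the projection formula, reduce to $R^{i}\rho_{\ast}(\Omega^{b}_{E/Z}\otimes\CO_{E}(l))=0$, and obtain this from fibrewise Bott vanishing via cohomology and base change (with the descending induction from Grothendieck vanishing and Nakayama, as you indicate). Your filtration is exactly the globalization of the product decomposition the paper exploits after trivializing---locally over $Z$ the filtration splits---and both routes rest on the same characteristic-free input, namely Bott vanishing on $\mathbb{P}^{c-1}$, for which the paper cites \cite{Bot57} and \cite{Hua01}. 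What your route buys is that it avoids both the local trivialization and the K\"{u}nneth formula; the price is invoking the base-change machinery (flatness over $Z$, semicontinuity arguments), which the paper's more hands-on local computation sidesteps entirely. Both are valid over $k$ of arbitrary characteristic.
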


\begin{proof}
From the isomorphism \eqref{relat-direct-im},
it suffices to show
$R^{i}\rho_{\ast}(\tilde{\iota}^{\ast}\Omega^{p}_{\tilde{X}} \otimes \CO_{E}(m))=0$
for any $m\geq 1$ and $i\geq 1$.
Our first goal is to show
$
R^{i}\rho_\ast(\Omega^{p}_{E}\otimes\CO_{E}(m))=0
$
and then the vanishing of the term
$R^{i}\rho_{\ast}(\tilde{\iota}^{\ast}\Omega^{p}_{\tilde{X}} \otimes \CO_{E}(m))$
follows from the exactness of the long exact sequence of the higher direct images for \eqref{wedge-normal-seq}.
Actually, this is a local problem over the center $Z$.
Note that $E$ is the projectivization of $\mathcal{N}_{Z/X}$ which admits local triviality.
Without loss of generality, we assume that $Z=\mathrm{Spec}\,A$ is a smooth variety
and $E=\mathrm{Spec}\,A \times \mathbb{P}_{k}^{c-1}$ is a product space.
By the K\"{u}nneth formula, we have

\begin{eqnarray*}
H^{i}(E,\Omega^{p}_{E}\otimes\CO_{E}(m))
&\cong&
\bigoplus_{{0\leq r\leq p \atop{0\leq s\leq i}}} H^{i-s}(\mathrm{Spec}\,A,\Omega_{\mathrm{Spec}\,A}^{r})
\otimes H^{s}(\mathbb{P}^{c-1}, \Omega_{\mathbb{P}^{c-1}}^{p-r}\otimes \CO_{\mathbb{P}^{c-1}}(m))\\
&\cong&
\bigoplus_{0\leq r\leq p} H^{0}(\mathrm{Spec}\,A,\Omega_{\mathrm{Spec}\,A}^{r})
\otimes
H^{i}(\mathbb{P}^{c-1}, \Omega_{\mathbb{P}^{c-1}}^{p-r}
\otimes \CO_{\mathbb{P}^{c-1}}(m))
\end{eqnarray*}
since there holds $H^{i-s}(\mathrm{Spec}\, A, \Omega_{\mathrm{Spec}\,A}^{r})=0$ when $i-s>0$.
Moreover, the Bott formula implies that the cohomology group
$H^{i}(\mathbb{P}^{c-1}, \Omega_{\mathbb{P}^{c-1}}^{p-r}\otimes \CO_{\mathbb{P}^{c-1}}(m))$ vanishes for any $i\geq 1$ and $m\geq 1$, see \cite[Proposition 14.4]{Bot57} or \cite[Theorem 4.5]{Hua01}.
This implies
$H^{i}(E, \Omega^{p}_{E} \otimes  \CO_{E}(m))=0$ for any
$i\geq 1$ and $m\geq 1$ and therefore, from definition, we are led to the conclusion that the higher direct images
$
R^{i}\rho_\ast(\Omega^{p}_{E}\otimes\CO_{E}(m))
$
vanish.
\end{proof}

Consider the long exact sequence of the higher direct images of \eqref{wedge-normal-seq} for $m=0$.
Because
$R^{i}\rho_{\ast}(\Omega^{p}_{E} \otimes  \CO_{E}(1))=0$
for every $i\geq 1$, the exactness of the long exact sequence implies the isomorphism
$R^{i}\rho_{\ast}\tilde{\iota}^{\ast}\Omega_{\tilde{X}}^{p} \stackrel{\simeq}\longrightarrow
R^{i}\rho_{\ast}\Omega^{p}_{E}$.
Set $m=0$ in \eqref{relat-direct-im} and therefore we get
\begin{equation*}
R^{i}\pi_{\ast}(\Omega_{\tilde{X}}^{p}  \otimes \tilde{\iota}_{\ast} \CO_{E})
\stackrel{\simeq}\longrightarrow
\iota_{\ast}R^{i}\rho_{\ast} \tilde{\iota}^{\ast}\Omega_{\tilde{X}}^{p}
\stackrel{\simeq}\longrightarrow
\iota_{\ast}R^{i}\rho_{\ast} \Omega_{E}^{p}.
\end{equation*}
Consequently,  to complete the proof, it is sufficient to show
\begin{equation}\label{eq-equal}
R^{i}\pi_{\ast}\Omega^{p}_{\tilde{X}}
\stackrel{\simeq}\longrightarrow
R^{i}\pi_{\ast}(\Omega_{\tilde{X}}^{p}  \otimes \tilde{\iota}_{\ast} \CO_{E}).
\end{equation}
Now consider the long exact sequence of the higher direct images for \eqref{twist-structure-seq}.
Thanks to Claim \ref{highvani-B}, the morphism
\begin{equation}\label{vanishsurj}
R^{i}\pi_{\ast}(\Omega_{\tilde{X}}^{p} \otimes \CO_{\tilde{X}}(m+1))
\longrightarrow
R^{i}\pi_{\ast}\bigl
(\Omega_{\tilde{X}}^{p} \otimes \CO_{\tilde{X}}(m))
\end{equation}
is surjective for any $m\geq 1$.
Observe that $\CO_{\tilde{X}}(1)$ is very ample with respect to the projective morphism $\pi$.
According to the relative Serre vanishing theorem \cite[Chapter III, Theorem 8.8 (c)]{Har77},
there is a positive integer $l_{0}$ such that for any $l\geq l_0$,
\begin{equation}\label{serre-zero}
R^{i}\pi_{\ast}(\Omega_{\tilde{X}}^{p} \otimes \CO_{\tilde{X}}(l))
=0.
\end{equation}
From \eqref{vanishsurj} and \eqref{serre-zero},
by induction we have
\begin{equation}\label{eq-equal2}
R^{i}\pi_{\ast}\bigl
(\Omega_{\tilde{X}}^{p} \otimes \CO_{\tilde{X}}(1))=0.
\end{equation}
Finally, let us turn back to the long exact sequence of the higher direct images of \eqref{twist-structure-seq} for $m=0$.
It follows from \eqref{eq-equal2} that the isomorphism \eqref{eq-equal} holds and this completes the proof of Lemma \ref{key-iso}.
\end{proof}

\section{Relative Hodge sheaves}

In this section, we introduce the notion of relative Hodge sheaves and prove the isomorphism \eqref{icrhs} in Theorem \ref{mainthm}.

Let $X$ be a smooth proper variety of dimension $n$ and $\iota:Z\hookrightarrow X$ a smooth closed subvariety.
From definition of closed subvariety,
there is a natural {\it surjective} morphism
\begin{equation}\label{closed-embedding}
\iota^{\#}: \CO_{X}\longrightarrow \iota_{\ast}\CO_{Z}
\end{equation}
and the kernel of $\iota^{\#}$ is the coherent sheaf of ideals $\mathcal{I}_{Z}$ of $Z$ in $X$.
As a consequence, there exists a natural short exact sequence of coherent $\CO_{X}$-modules
\begin{equation*}
\xymatrix@C=0.5cm{
  0 \ar[r] & \mathcal{I}_{Z} \ar[r]^{} & \CO_{X}\ar[r]^{\iota^{\#}} & \iota_{\ast}\CO_{Z} \ar[r] & 0.}
\end{equation*}
In fact, the notion of relative Hodge sheaves is a generalization of the ideal sheaf $\mathcal{I}_{Z}$ above.
Generally, we consider the sheaves of regular differential $p$-forms over $X$ and $Z$.

\begin{defn}\label{rhs}
For any $0\leq p\leq n$, the {\it $p$-th relative Hodge sheaf} associated to the pair $(X, Z)$ is defined to be
the kernel sheaf
\begin{equation}\label{rel-Hodge-sh}
\K_{Z}^{p}
:=
\ker\big(\Omega_{X}^{p} \stackrel{\iota^{\#}}\longrightarrow  \iota_{\ast}\Omega_{Z}^{p}\big),
\end{equation}
where $\iota^{\#}$ is the natural pullback of regular differential $p$-forms.
\end{defn}

Note that $\K_{Z}^{0}$ is the ideal sheaf $\mathcal{I}_{Z}$ and $\K_{Z}^{p}=\Omega_{X}^{p}$ if $p>\mathrm{dim}\, Z$.
Moreover, we have the following lemma.

\begin{lem}\label{short-exact}
For any $0\leq p\leq n$,
there exists a short exact sequence
\begin{equation}\label{key-short-exact}
\xymatrix@C=0.5cm{
  0 \ar[r] & \K^{p}_{Z} \ar[r]^{} & \Omega_{X}^{p} \ar[r]^{\iota^{\#}} & \iota_{\ast}\Omega_{Z}^{p} \ar[r] & 0}
\end{equation}
of $\CO_{X}$-modules.
\end{lem}

\begin{proof}
It suffices to show that the morphism $\iota^{\#}$ in \eqref{rel-Hodge-sh} is surjective.
In fact, this is a direct consequence of local calculation and the surjectivity of \eqref{closed-embedding}.
\end{proof}

From now on, we assume that $X$ is a smooth proper variety over $k$ and $\iota:Z\hookrightarrow X$ is a smooth closed subvariety of codimension $c\geq 2$.
Let $\pi:\tilde{X}\rightarrow X$ be the blow-up of $X$ along $Z$ and $\tilde{\iota}:E\hookrightarrow\tilde{X}$ the exceptional divisor.
Set $\rho=\pi|_{E}:E\rightarrow Z$.
Analogous to \eqref{key-short-exact},
there is a short exact sequence of $\CO_{\tilde{X}}$-modules associated with the pair $(\tilde{X}, E)$:
\begin{equation}\label{key-short-exact-E}
\xymatrix@C=0.5cm{
  0 \ar[r] & \K^{p}_{E} \ar[r]^{} & \Omega_{\tilde{X}}^{p} \ar[r]^{\iota^{\#}} & \tilde{\iota}_{\ast}\Omega_{E}^{p} \ar[r] & 0.}
\end{equation}
Observe that $\tilde{X}-E$ is isomorphic to $X-Z$.
Intuitively, the relative Hodge sheaves $\K^{p}_{E}$ and $\K^{p}_{Z}$ are \lq\lq \emph{geometrically}" dependent on $\tilde{X}-E$ and $X-Z$, respectively.
This implies that $\K^{p}_{E}$ should be the \lq\lq \emph{same}" as $\K^{p}_{Z}$ in some sense.
The following lemma explains such an \lq\lq \emph{equality}".

\begin{lem}\label{key-lem}
Let $\V$ be a locally free sheaf over $X$ and set $\tilde{\V}:=\pi^{\ast}\V$.
Then we have an isomorphism
$\pi^{\#}: \K_{Z}^{p}(\V)
\stackrel{\simeq}\longrightarrow
\pi_{\ast}\K_{E}^{p}(\tilde{\V})$
and $R^{i}\pi_{\ast}\K_{E}^{p}(\tilde{\V})=0$ for any $i\geq 1$.
\end{lem}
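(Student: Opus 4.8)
The plan is to tensor the defining sequence \eqref{key-short-exact-E} of $\K_E^p$ by the locally free sheaf $\tilde{\V}=\pi^{\ast}\V$, push the result forward along $\pi$, and compare it termwise with the sequence \eqref{key-short-exact} for $\K_Z^p$ tensored by $\V$. Since $\tilde{\V}$ is locally free, tensoring \eqref{key-short-exact-E} by $\tilde{\V}$ preserves exactness and yields the short exact sequence
\[
0 \longrightarrow \K_E^p(\tilde{\V}) \longrightarrow \Omega_{\tilde{X}}^p(\tilde{\V}) \longrightarrow \tilde{\iota}_{\ast}\Omega_E^p \otimes \tilde{\V} \longrightarrow 0
\]
of $\CO_{\tilde{X}}$-modules. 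By the projection formula together with the identity $\tilde{\iota}^{\ast}\pi^{\ast}\V=\rho^{\ast}\iota^{\ast}\V$ coming from $\pi\circ\tilde{\iota}=\iota\circ\rho$ in \eqref{blow-up-diagram}, the quotient term rewrites as $\tilde{\iota}_{\ast}(\Omega_E^p\otimes\rho^{\ast}\iota^{\ast}\V)$. First I would apply $R\pi_{\ast}$ to this sequence and record the associated long exact sequence of higher direct images.

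The core computation is to evaluate each of these higher direct images. For the middle term, the projection formula gives $R^i\pi_{\ast}\Omega_{\tilde{X}}^p(\tilde{\V})\cong (R^i\pi_{\ast}\Omega_{\tilde{X}}^p)\otimes\V$; by Lemma \ref{iso1-2-3}(i) this is $\Omega_X^p(\V)$ when $i=0$, and by Lemma \ref{iso1-2-3}(iii) it is $(\iota_{\ast}R^i\rho_{\ast}\Omega_E^p)\otimes\V$ for $i\geq 1$. For the quotient term, the exactness of $\tilde{\iota}_{\ast}$, the relation $\pi\circ\tilde{\iota}=\iota\circ\rho$, and the exactness of $\iota_{\ast}$ give, exactly as in the chain \eqref{relat-direct-im}, the identification $R^i\pi_{\ast}\big(\tilde{\iota}_{\ast}(\Omega_E^p\otimes\rho^{\ast}\iota^{\ast}\V)\big)\cong \iota_{\ast}\big((R^i\rho_{\ast}\Omega_E^p)\otimes\iota^{\ast}\V\big)$; for $i=0$ this becomes $\iota_{\ast}\Omega_Z^p(\iota^{\ast}\V)$ by Lemma \ref{iso1-2-3}(ii).

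The crucial point is then to identify the connecting maps $a_i\colon R^i\pi_{\ast}\Omega_{\tilde{X}}^p(\tilde{\V})\to R^i\pi_{\ast}\big(\tilde{\iota}_{\ast}(\Omega_E^p\otimes\rho^{\ast}\iota^{\ast}\V)\big)$ under these identifications. For $i\geq 1$, $a_i$ is precisely the isomorphism of Lemma \ref{iso1-2-3}(iii) tensored with $\V$, hence an isomorphism; being trapped between two such isomorphisms in the long exact sequence forces $R^i\pi_{\ast}\K_E^p(\tilde{\V})=0$ for all $i\geq 2$ and makes the map $R^1\pi_{\ast}\K_E^p(\tilde{\V})\to R^1\pi_{\ast}\Omega_{\tilde{X}}^p(\tilde{\V})$ zero. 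For $i=0$, I claim $a_0$ corresponds to $\iota^{\#}\otimes\Id_{\V}\colon \Omega_X^p(\V)\to\iota_{\ast}\Omega_Z^p(\V)$, which is surjective with kernel $\K_Z^p(\V)$ by \eqref{key-short-exact}. Granting this, the long exact sequence yields $R^1\pi_{\ast}\K_E^p(\tilde{\V})=\mathrm{coker}(a_0)=0$ and $\pi_{\ast}\K_E^p(\tilde{\V})=\ker(a_0)\cong\K_Z^p(\V)$, the latter realized by $\pi^{\#}$, which completes the proof.

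I expect the main obstacle to be the compatibility assertions for the maps $a_i$: namely that, after applying the projection formula and Lemma \ref{iso1-2-3}, the pushed-forward restriction maps $R^i\pi_{\ast}(\tilde{\iota}^{\#}\otimes\tilde{\V})$ genuinely coincide with $\tilde{\iota}^{\#}\otimes\Id_{\V}$ for $i\geq 1$ and with $\iota^{\#}\otimes\Id_{\V}$ for $i=0$. This is a naturality check built from the commutativity of the pullback operators $\pi^{\#},\rho^{\#},\tilde{\iota}^{\#},\iota^{\#}$ against the square \eqref{blow-up-diagram}; concretely, it reduces to verifying that the square with top row $\iota^{\#}\otimes\Id_{\V}$, bottom row $a_0$, and vertical isomorphisms $\pi^{\#}$ and (projection formula)$\circ$\,Lemma \ref{iso1-2-3}(ii) commutes. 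Once this diagram chase is in place, the vanishing and the isomorphism follow at once from the long exact sequence.
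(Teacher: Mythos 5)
Your proposal is correct and follows essentially the same route as the paper's proof: twist the defining sequence \eqref{key-short-exact-E} by $\tilde{\V}$, push forward along $\pi$, use Lemma \ref{iso1-2-3}(iii) plus the projection formula to see that the maps in the long exact sequence are isomorphisms in degrees $i\geq 1$ (forcing vanishing for $i\geq 2$), and then identify the degree-zero map with $\iota^{\#}\otimes\Id_{\V}$ via the commutative square built from Lemma \ref{iso1-2-3}(i)--(ii) and $\pi\circ\tilde{\iota}=\iota\circ\rho$, which is exactly the paper's diagram \eqref{kershisodiag}. The compatibility check you flag as the main obstacle is precisely what the paper settles by that diagram, so your argument closes the same way.
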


\begin{proof}
Note that $\iota$ and $\tilde{\iota}$ are closed inclusions.
Due to Lemma \ref{key-iso} and the commutativity of the blow-up diagram
$\pi\circ \tilde{\iota}=\iota\circ\rho$,
we obtain the following canonical isomorphisms
\begin{equation}\label{import-isos-on-E}
R^{i}\pi_{\ast}\Omega^{p}_{\tilde{X}}
\stackrel{\simeq}\longrightarrow
\iota_{\ast}R^{i}\rho_{\ast}\Omega^{p}_{E}
\cong  R^{i}(\iota\circ\rho)_{\ast}\Omega^{p}_{E}
\cong R^{i}(\pi\circ\tilde{\iota})_{\ast}\Omega^{p}_{E}
\cong R^{i}\pi_{\ast}\tilde{\iota}_{\ast}\Omega^{p}_{E}
\end{equation}
for each $i\geq 1$.
As a consequence,  applying the projection formula to \eqref{import-isos-on-E} yields an isomorphism
\begin{equation}\label{import-isos-on-E-1}
R^{i}\pi_{\ast}\Omega^{p}_{\tilde{X}}(\tilde{\V})
\stackrel{\simeq}\longrightarrow
R^{i}\pi_{\ast}\tilde{\iota}_{\ast}\Omega^{p}_{E}
(\tilde{\iota}^{\ast}\tilde{\V})
\end{equation}
for each $i\geq 1$.
Via tensoring \eqref{key-short-exact-E} with the locally free sheaf $\tilde{\V}$,
we get a short exact sequence
\begin{equation}\label{twist-rel-Dol-sequ-holom-E}
\xymatrix@C=0.5cm{
  0 \ar[r] & \K_{E}^{p}(\tilde{\V})  \ar[r]^{} & \Omega_{\tilde{X}}^{p}(\tilde{\V})   \ar[r]^{\tilde{\iota}^{\#}\;\;\;} & \tilde{\iota}_{\ast}\Omega_{E}^{p}(\tilde{\iota}^{\ast}\tilde{\V}) \ar[r] & 0.}
\end{equation}
Consider the higher direct images of \eqref{twist-rel-Dol-sequ-holom-E} along $\pi$.
Then there is a long exact sequence
\begin{equation}\label{l-h-d}
\begin{tikzcd}
  0\rar & \pi_{\ast}\K_{E}^{p}(\tilde{\V})  \rar &  \pi_{\ast}\Omega^{p}_{\tilde{X}}(\tilde{\V})  \rar & \pi_{\ast}\tilde{\iota}_{\ast}
  \Omega^{p}_{E}(\tilde{\iota}^{\ast}\tilde{\V}) \ar[out=-23, in=150]{dll} \\
  & R^{1}\pi_{\ast}\K_{E}^{p}(\tilde{\V})  \rar & R^{1}\pi_{\ast}\Omega^{p}_{\tilde{X}}(\tilde{\V}) \rar &R^{1}\pi_{\ast}\tilde{\iota}_{\ast}\Omega^{p}_{E}(\tilde{\iota}^{\ast}\tilde{V})\longrightarrow \cdots.
\end{tikzcd}
\end{equation}
Combining \eqref{import-isos-on-E-1} with the exactness of the sequence \eqref{l-h-d},
we get $R^{i}\pi_{\ast}\K_{E}^{p}(\tilde{\V})=0$ for any $i\geq 2$.
Now we claim $\pi^{\#}: \K_{Z}^{p}(\V)\stackrel{\simeq}\longrightarrow \pi_{\ast}\K_{E}^{p} (\tilde{\V})$ and $R^{1}\pi_{\ast}\K_{E}^{p}(\tilde{\V})=0$.
By Lemma \ref{iso1-2-3},
the blow-up diagram \eqref{blow-up-diagram} gives a commutative diagram
\begin{equation}\label{kershisodiag}
\vcenter{
\xymatrix@C=0.4cm{
0 \ar[r]^{} & \K_{Z}^{p}(\V)
\ar[d]_{\pi^{\#}}^{} \ar[r]^{} &
\Omega_{X}^{p}(\V)
\ar[d]_{\pi^{\#}}^{\cong} \ar[r]^{\iota^{\#}} &
\iota_{\ast}\Omega_{Z}^{p}(\iota^{\ast}\V)
\ar[d]_{{\rho}^{\#}}^{\cong} \ar[r]^{} & 0 \\
0 \ar[r] &
\pi_{\ast}\K_{E}^{p}(\tilde{\V})
\ar[r]^{} &
\pi_{\ast}\Omega_{\tilde{X}}^{p}(\tilde{\V})
\ar[r]^{\tilde{\iota}^{\#}} &
\pi_{\ast}\tilde{\iota}_{\ast}\Omega_{E}^{p}
(\tilde{\iota}^{\ast}\tilde{\V}).
}}
\end{equation}
The commutativity of \eqref{kershisodiag} implies that the morphism $\tilde{\iota}^{\#}$ in \eqref{kershisodiag} is surjective and therefore there holds the isomorphism
$\pi^{\#}: \K_{Z}^{p}(\V)\stackrel{\simeq}\longrightarrow \pi_{\ast}\K_{E}^{p} (\tilde{\V})$.
Also, from the exactness of \eqref{l-h-d} and the surjectivity of $\tilde{\iota}^{\#}$, we get $R^{1}\pi_{\ast}\K_{E}^{p}(\tilde{\V})=0$ and
this completes the proof.
\end{proof}

\begin{rem}
Consider the $0$-th relative Hodge sheaf, i.e., the ideal sheaf $\mathcal{I}_{Z}$.
For any $a\geq 0$ and $i>0$, one can show that the following results hold:
$$
\pi_{\ast}\CO_{\tilde{X}}(-aE)\cong \mathcal{I}_{Z}^{a}\;\;
\textrm{and}
\;\;
R^{i}\pi_{\ast}\CO_{\tilde{X}}(-aE)=0,
$$
where $\mathcal{I}_{Z}^{a}$ is $a$-th power of the ideal sheaf $\mathcal{I}_{Z}$ (cf. \cite[Lemma 4.3.16]{Laz04}).
In general, without the assumption of smoothness for $X$,
one has to be content with large values of $a$  (cf. \cite[Lemma 5.4.24]{Laz04}).
\end{rem}

The following theorem about the cohomology of relative Hodge sheaves is crucial for the proof in the next section.
\begin{thm}[{ = Theorem \ref{mainthm}, \eqref{icrhs}}]\label{key-tech-prop}
For any integer $l\geq0$,
the induced morphism
\begin{equation*}
\pi^{\#}:
H^{l}(X, \K_{Z}^{p}(\V))
\longrightarrow
H^{l}(\tilde{X}, \pi^{-1}\K_{Z}^{p}(\V))
\longrightarrow
H^{l}(\tilde{X}, \K_{E}^{p}(\tilde{\V}))
\end{equation*}
is an isomorphism.
\end{thm}

\begin{proof}
Similar to \eqref{topolpullbackmorph},
there is a natural composition morphism,
\begin{equation}\label{kersh-diagram}
\vcenter{
\xymatrix@C=0.5cm{
&\pi^{-1}\K_{Z}^{p}(\V) \ar[d]^{} \ar[rd]^{\alpha} \\
& \pi^{-1}\pi_{\ast}\K_{E}^{p}(\tilde{\V}) \ar[r] & \K_{E}^{p}(\tilde{\V}) &
}}
\end{equation}
of $f^{-1}\CO_{X}$-modules.
Likewise, we have the induced morphism as \eqref{real-pullback}
$$
\K_{Z}^{p}(\V)
\longrightarrow
R\pi_{\ast}\pi^{-1}\K_{Z}^{p}(\V)
\longrightarrow
R\pi_{\ast}\K_{E}^{p}(\tilde{\V})
$$
in the derived category $\D^{+}(\CO_{X})$ and hence the induced morphism of cohomology groups
$$
\pi^{\#}: H^{l}(X, \K_{Z}^{p}(\V))
\longrightarrow
H^{l}(\tilde{X}, \pi^{-1}\K_{Z}^{p}(\V))
\stackrel{\alpha}\longrightarrow
H^{l}(\tilde{X}, \K_{E}^{p}(\tilde{\V})).
$$
Applying \eqref{funct-morph} to $\pi_{\ast}\K_{E}^{p}(\tilde{\V})$ with respect to $\pi$,
one obtains a natural morphism
\begin{equation*}\label{real-pullback-E}
\pi_{\ast}\K_{E}^{p}(\tilde{\V})
\longrightarrow
R\pi_{\ast}\pi^{-1}\pi_{\ast}\K_{E}^{p}(\tilde{\V})
\end{equation*}
in $\D^{+}(\CO_{X})$.
The isomorphism $\pi^{\#}: \K_{Z}^{p}(\V) \stackrel{\simeq}\longrightarrow \pi_{\ast}\K_{E}^{p}(\tilde{\V}) $ in Lemma \ref{key-lem},
the functorial property of \eqref{funct-morph} and the commutativity of \eqref{kersh-diagram} yield a commutative diagram with vertical isomorphisms
\begin{equation}\label{kershcohom-diagram}
\vcenter{
\xymatrix@C=0.4cm{
&H^{l}(X, \K_{Z}^{p}(\V)) \ar[d]_{}^{\cong} \ar[r]  &
H^{l}(\tilde{X}, \pi^{-1}\K_{Z}^{p}(\V)) \ar[d]_{}^{\cong} \ar[rd] \\
& H^{l}(X, \pi_{\ast}\K_{E}^{p}(\tilde{\V})) \ar[r] & H^{l}(\tilde{X}, \pi^{-1}\pi_{\ast}\K_{E}^{p}(\tilde{\V})) \ar[r] & H^{l}(\tilde{X}, \K_{E}^{p}(\tilde{\V})). &
}}
\end{equation}

To show that $\pi^{\#}$ is an isomorphism,
we consider the Leray spectral sequence of $\K_{E}^{p}(\tilde{\V})$
under the blow-up morphism $\pi:\tilde{X}\longrightarrow X$.
Then there exists a spectral sequence $\{E_{r}\}$ with the $E_{2}$-terms
$$
E_{2}^{s,t}
=
H^{s}(X, R^{t}\pi_{\ast}\K_{E}^{p}(\tilde{\V})),
$$
converging to a limit term of $E_{\infty}^{s, l-s}$
which is a graded piece of the graded vector space $H^{l}(\tilde{X},\K_{E}^{p}(\tilde{\V}))$ with respect to a given filtration.
Moreover, from a standard result in spectral sequence theory \cite[(13.8) Theorem of Chapter IV]{Dem12}, the edge morphism
$$
H^{l}(X,\pi_{\ast}\K_{E}^{p}(\tilde{\V}))
\twoheadrightarrow E^{l,0}_{\infty}
\hookrightarrow H^{l}(\tilde{X},\K_{E}^{p}(\tilde{\V}))
$$
is indeed the composition morphism
\begin{equation}\label{edge-map}
H^{l}(X, \pi_{\ast}\K_{E}^{p}(\tilde{\V}))
\longrightarrow
H^{l}(\tilde{X}, \pi^{-1}\pi_{\ast}\K_{E}^{p}(\tilde{\V}))
\longrightarrow
H^{l}(\tilde{X}, \K_{E}^{p}(\tilde{\V})).
\end{equation}
Again by Lemma \ref{key-lem}, we have
$R^{i}\pi_{\ast}\K_{E}^{p}(\tilde{\V})=0$ for $i\geq 1$ and
hence $E^{s, t}_{2}=0$ for any $t\geq1$.
It follows that the edge morphism \eqref{edge-map} is an isomorphism.
As a result, from the commutative diagram \eqref{kershcohom-diagram}
we obtain that the morphism
\begin{equation*}
\pi^{\#}:
H^{l}(X, \K_{Z}^{p}(\V))
\longrightarrow
H^{l}(\tilde{X}, \pi^{-1}\K_{Z}^{p}(\V))
\longrightarrow
H^{l}(\tilde{X}, \K_{E}^{p}(\tilde{\V}))
\end{equation*}
is an isomorphism and the proof is now complete.
\end{proof}

\begin{rem}
Abstractly, as a direct consequence of Lemma \ref{key-lem} and the degeneracy of the Leray spectral sequence at $E_{2}$, we get an isomorphism between $H^{l}(X, \K_{Z}^{p}(\V))$ and $H^{l}(\tilde{X}, \K_{E}^{p}(\tilde{\V}))$ as vector spaces over $k$.
The main reason why we use the argument in Theorem \ref{key-tech-prop} is that the abstract isomorphism above is not canonical.
However, in our proof of the Hodge blow-up formula below we need a \emph{canonical} isomorphism from
$H^{l}(X, \K_{Z}^{p}(\V))$ to
$H^{l}(\tilde{X}, \K_{E}^{p}(\tilde{\V}))$
which is induced by the blow-up morphism $\pi$.
\end{rem}


\section{Blow-up formula of Hodge cohomology}

The purpose of this section is to explain the sheaf-theoretic proof of blow-up formula \eqref{bundle-blowup-formula} of Hodge cohomology in Theorem \ref{mainthm}.

Suppose that $\V$ is a locally free sheaf over $X$.
We will show that the blow-up diagram \eqref{blow-up-diagram} yields a commutative diagram of Hodge cohomology groups:
\begin{equation}\label{coh-blow-up-diagram}
\vcenter{
\xymatrix@C=2cm{
 H^{q}(X,\Omega^{p}_{X}(\V))\ar[d]_{\pi^{\#}} \ar[r]^{\iota^{\#}} & H^{q}(Z,\Omega^{p}_{Z}(\iota^\ast\V))\ar[d]^{\rho^{\#}}\\
 H^{q}(\tilde{X},\Omega^{p}_{\tilde{X}}(\tilde{\V})) \ar[r]^{\tilde{\iota}^{\#}} & H^{q}(E,\Omega^{p}_{E}(\tilde{\iota}^\ast\tilde{\V})),}}
\end{equation}
where $\tilde{\V}=\pi^\ast\V$.
To obtain the Hodge blow-up formula, one needs to show that $\pi^{\#}$ and $\rho^{\#}$ in \eqref{coh-blow-up-diagram} are \emph{injective} and then verify that the morphism $\tilde{\iota}^{\#}$ induces an isomorphism from the co-kernel of $\pi^{\#}$ to the co-kernel of $\rho^{\#}$.
Finally, to describe the term $\mathrm{coker}\,(\rho^{\#})$ explicitly, we establish the projective bundle formula for Hodge cohomology.
The trick of the proof is to plug the square \eqref{coh-blow-up-diagram} into a commutative diagram of long exact sequences containing the terms of sheaf cohomology of relative Hodge sheaves.
Then we can apply some results in homological algebra to complete the argument.

We divide the proof into three steps.
\paragraph{\textbf{Step 1}}
Consider the pair $(X,Z)$.
According to Lemma \ref{short-exact}, there is a natural short exact sequence of sheaves over $X$:
$$
\xymatrix@C=0.5cm{
  0 \ar[r] & \K^{p}_{Z} \ar[r]^{} & \Omega_{X}^{p} \ar[r]^{\iota^{\#}} & \iota_{\ast}\Omega_{Z}^{p} \ar[r] & 0.}
$$
Twisting the sequence above with $\V$ and using the projection formula gives rise to a short exact sequence
\begin{equation}\label{twist-exactseq-X}
\xymatrix@C=0.5cm{
  0 \ar[r] & \K_{Z}^{p}(\V) \ar[r]^{} & \Omega_{X}^{p}(\V) \ar[r]^{\iota^{\#}} & \iota_{\ast}\Omega_{Z}^{p}(\iota^{\ast}\V) \ar[r] & 0.}
\end{equation}
So, following the construction \eqref{pre-comdiag-1}, we get a commutative ladder of long exact sequences from \eqref{twist-exactseq-X}:
\begin{equation}\label{long-sequ1}
\vcenter{
\xymatrix@C=0.3cm{
\cdots\ar[r]^{} & H^{q}(X,\K_{Z}^{p}(\V)) \ar[d]_{} \ar[r]^{} &H^{q}(X, \Omega_{X}^{p}(\V)) \ar[d]_{} \ar[r]^{} & H^{q}(X, \iota_{\ast}\Omega_{Z}^{p}(\iota^{\ast}\V))\ar[d]_{} \ar[r]^{} & H^{q+1}(X,\K_{Z}^{p}(\V))\ar[d]_{} \ar[r]^{} & \cdots \\
\cdots\ar[r] & H^{q}(\tilde{X},\pi^{-1}\K_{Z}^{p}(\V)) \ar[r]^{} &
  H^{q}(\tilde{X},\pi^{-1} \Omega_{X}^{p}(\V)) \ar[r]^{} &
  H^{q}(\tilde{X}, \pi^{-1}\iota_{\ast}\Omega_{Z}^{p}(\iota^{\ast}\V))\ar[r]^{} &
  H^{q+1}(\tilde{X},\pi^{-1}\K_{Z}^{p}(\V)) \ar[r] &\cdots.}}
\end{equation}

Moreover, because the topological inverse image functor $\pi^{-1}$ is exact, applying $\pi^{-1}$ to \eqref{twist-exactseq-X} yields a short exact sequence of $\pi^{-1}\CO_{X}$-modules
$$
\xymatrix@C=0.5cm{
0 \ar[r]^{} & \pi^{-1}\K_{Z}^{p}(\V)
   \ar[r]^{} & \pi^{-1}\Omega_{X}^{p}(\V)
   \ar[r]^{} & \pi^{-1}\iota_{\ast}\Omega_{Z}^{p}(\iota^{\ast}\V)
   \ar[r]^{} & 0.}
$$
Via a straightforward checking, we can show that the blow-up diagram \eqref{blow-up-diagram}
induces a commutative diagram of short exact sequences
\begin{equation*}
\xymatrix@C=0.5cm{
 0 \ar[r]^{} &  \pi^{-1}\K_{Z}^{p}(\V) \ar[d]_{\pi^{\#}} \ar[r]^{} & \pi^{-1}\Omega_{X}^{p}(\V) \ar[d]_{\pi^{\#}} \ar[r]^{\iota^{\#}} &  \pi^{-1}\iota_{\ast}\Omega_{Z}^{p}(\iota^{\ast}\V)\ar[d]_{\rho^{\#}} \ar[r]^{} & 0 \\
0 \ar[r] &  \K_{E}^{p}(\tilde{\V}) \ar[r]^{} &
  \Omega_{\tilde{X}}^{p}(\tilde{\V}) \ar[r]^{\tilde{\iota}^{\#}} &
 \tilde{\iota}_{\ast}\Omega_{E}^{p}(\tilde{\iota}^{\ast}\tilde{\V}) \ar[r] &0, }
\end{equation*}
where the morphisms $(-)^{\#}$ are induced by the pullbacks of regular differential forms.
Taking the cohomology functor $H^{\bullet}(\tilde{X}, -)$ to the diagram above,
we obtain a commutative ladder of long exact sequences
\begin{equation}\label{long-sequ2}
\vcenter{
\xymatrix@C=0.3cm{
\cdots\ar[r]^{} & H^{q}(\tilde{X},\pi^{-1}\K_{Z}^{p}(\V))  \ar[d]_{} \ar[r]^{} &H^{q}(\tilde{X}, \pi^{-1}\Omega_{X}^{p}(\V)) \ar[d]_{} \ar[r]^{} & H^{q}(\tilde{X}, \pi^{-1}\iota_{\ast}\Omega_{Z}^{p}(\iota^{\ast}\V)) \ar[d]_{} \ar[r]^{} & H^{q+1}(\tilde{X},\pi^{-1}\K_{Z}^{p}(\V))\ar[d]_{} \ar[r]^{} & \cdots\\
 \cdots\ar[r] & H^{q}(\tilde{X},\K_{E}^{p}(\tilde{\V})) \ar[r]^{} &
  H^{q}(\tilde{X}, \Omega_{\tilde{X}}^{p}(\tilde{\V})) \ar[r]^{} &
  H^{q}(\tilde{X}, \tilde{\iota}_{\ast}\Omega_{E}^{p}(\tilde{\iota}^{\ast}\tilde{\V}))\ar[r]^{} &
  H^{q+1}(\tilde{X},\K_{E}^{p}(\tilde{\V})) \ar[r] &  \cdots}}
\end{equation}
Since $\iota$ and $\tilde{\iota}$ are closed inclusions, we have
\begin{equation}\label{Z}
H^{q}(X, \iota_{\ast}\Omega_{Z}^{p}(\iota^{\ast}\V))=
H^{q}(Z, \Omega_{Z}^{p}(\iota^{\ast}\V))
\end{equation}
and
\begin{equation}\label{E}
H^{q}(\tilde{X}, \tilde{\iota}_{\ast}\Omega_{E}^{p}
(\tilde{\iota}^{\ast}\tilde{\V}))=
H^{q}(E,\Omega_{E}^{p}
(\tilde{\iota}^{\ast}\tilde{\V})).
\end{equation}
From \eqref{long-sequ1}-\eqref{E},
we get the desired commutative ladder of long exact sequences
\begin{equation}\label{final-diagram}
\vcenter{
\xymatrix@C=0.5cm{
\cdots\ar[r]^{} & H^{q}(X, \K_{Z}^{p}(\V))  \ar[d]_{\pi^{\#}} \ar[r]^{} &H^{q}(X, \Omega_{X}^{p}(\V)) \ar[d]_{\pi^{\#}}^{} \ar[r]^{} & H^{q}(Z, \Omega_{Z}^{p}(\iota^{\ast}\V)) \ar[d]_{\rho^{\#}} \ar[r]^{} & H^{q+1}(X, \K_{Z}^{p}(\V))\ar[d]_{\pi^{\#}}\ar[r]^{} & \cdots\\
 \cdots\ar[r] & H^{q}(\tilde{X},\K_{E}^{p}(\tilde{\V})) \ar[r]^{} &
  H^{q}(\tilde{X}, \Omega_{\tilde{X}}^{p}(\tilde{\V})) \ar[r]^{\tilde{\iota}^{\#}} &
  H^{q}(E, \Omega_{E}^{p}(\tilde{\iota}^{\ast}\tilde{\V}))\ar[r]^{} &
  H^{q+1}(\tilde{X},\K_{E}^{p}(\tilde{\V})) \ar[r] &\cdots }}
\end{equation}

\paragraph{\textbf{Step 2}}
According to Theorem \ref{key-tech-prop},
we see that the first and the fourth column maps in \eqref{final-diagram} are isomorphisms.
Now we verify the injectivity of the second column map in \eqref{final-diagram}.
The basic idea used here is attributed to Deligne \cite[Proposition 4.3]{Del68}.

\begin{lem}\label{injective-lem}
For any integer $q\geq0$,
the induced morphism
\begin{equation*}
\pi^{\#}:
H^{q}(X, \Omega_{X}^{p}(\V))
\longrightarrow
H^{q}(\tilde{X}, \pi^{-1}\Omega_{X}^{p}(\V))
\longrightarrow
H^{q}(\tilde{X}, \Omega_{\tilde{X}}^{p}(\tilde{\V}))
\end{equation*}
is injective.
\end{lem}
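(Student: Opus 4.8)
The plan is to run the same Leray-spectral-sequence argument as in Proposition \ref{key-tech-prop}, the only new feature being that the higher direct images no longer vanish but are instead controlled by Lemma \ref{key-iso}. First I would record that, by Lemma \ref{iso1-2-3}(i) and the projection formula, $\pi_{\ast}\Omega_{\tilde{X}}^{p}(\tilde{\V})\cong\Omega_{X}^{p}(\V)$, so that $H^{q}(X,\Omega_{X}^{p}(\V))\cong H^{q}(X,\pi_{\ast}\Omega_{\tilde{X}}^{p}(\tilde{\V}))=E_{2}^{q,0}$ in the Leray spectral sequence
\[
E_{2}^{s,t}=H^{s}(X,R^{t}\pi_{\ast}\Omega_{\tilde{X}}^{p}(\tilde{\V}))\Longrightarrow H^{s+t}(\tilde{X},\Omega_{\tilde{X}}^{p}(\tilde{\V})).
\]
Exactly as in Proposition \ref{key-tech-prop}, the map $\pi^{\#}$ is the composite of this isomorphism with the edge homomorphism $E_{2}^{q,0}\twoheadrightarrow E_{\infty}^{q,0}\hookrightarrow H^{q}(\tilde{X},\Omega_{\tilde{X}}^{p}(\tilde{\V}))$. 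The differentials $d_{r}\colon E_{r}^{q,0}\to E_{r}^{q+r,1-r}$ leaving the bottom row vanish trivially, since the target has negative second index; hence the injectivity of $\pi^{\#}$ is equivalent to $E_{2}^{q,0}=E_{\infty}^{q,0}$, that is, to the vanishing of every incoming differential $d_{r}\colon E_{r}^{q-r,r-1}\to E_{r}^{q,0}$ for $r\geq 2$.

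Next I would translate this into the derived category, where Deligne's idea enters. The edge map is induced by the canonical morphism $\tau_{\leq 0}R\pi_{\ast}\Omega_{\tilde{X}}^{p}(\tilde{\V})\cong\Omega_{X}^{p}(\V)\longrightarrow R\pi_{\ast}\Omega_{\tilde{X}}^{p}(\tilde{\V})$, and the incoming differentials are governed by the connecting maps of the truncation triangle $\tau_{\leq 0}\to R\pi_{\ast}\Omega_{\tilde{X}}^{p}(\tilde{\V})\to\tau_{\geq 1}\to$. Following Deligne, the plan is to produce a one-sided inverse, namely a morphism $t\colon R\pi_{\ast}\Omega_{\tilde{X}}^{p}(\tilde{\V})\to\Omega_{X}^{p}(\V)$ in $\D^{+}(\CO_{X})$ restricting to the identity on $\mathcal{H}^{0}$. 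Indeed, a split monomorphism remains one after any additive functor, so such a $t$ yields $H^{q}(X,t)\circ\pi^{\#}=\mathrm{id}$ (using $R\Gamma(X,R\pi_{\ast}(-))=R\Gamma(\tilde{X},-)$), whence $\pi^{\#}$ is injective for every $q$; equivalently, it forces all the incoming differentials above to vanish.

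To construct $t$ I would exploit the explicit form of the higher direct images from Lemma \ref{key-iso}: for $i\geq 1$ one has $R^{i}\pi_{\ast}\Omega_{\tilde{X}}^{p}(\tilde{\V})\cong\iota_{\ast}R^{i}\rho_{\ast}\Omega_{E}^{p}(\tilde{\iota}^{\ast}\tilde{\V})$, so that $\tau_{\geq 1}R\pi_{\ast}\Omega_{\tilde{X}}^{p}(\tilde{\V})$ has all of its cohomology supported on $Z$, which is of codimension $c\geq 2$. The problem thereby localizes over $Z$ and, via the local triviality of $E=\mathbb{P}(\mathcal{N}_{Z/X})\to Z$, reduces to the projective bundle $\rho$, precisely the situation in which the Bott-type vanishing underlying Lemma \ref{key-iso} was proved. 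I expect this splitting-off of the degree-zero part to be the main obstacle: in characteristic zero it would follow at once from the decomposition theorem (or Deligne's degeneration criterion), but those tools are unavailable in positive characteristic, so the retraction $t$—equivalently the vanishing of the connecting maps into $H^{q}(X,\Omega_{X}^{p}(\V))$—must be extracted by hand from the projective-bundle geometry and the vanishing computations already established for Lemma \ref{key-iso}. Once $t$ is available, $E_{2}^{q,0}=E_{\infty}^{q,0}$ follows and the injectivity of $\pi^{\#}$ is complete.
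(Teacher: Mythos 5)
Your reduction is correct as far as it goes: identifying $\pi^{\#}$ with the edge morphism of the Leray spectral sequence, and observing that a morphism $t\colon R\pi_{\ast}\Omega_{\tilde{X}}^{p}(\tilde{\V})\to\Omega_{X}^{p}(\V)$ in $\D^{+}(\CO_{X})$ with $t\circ\pi^{\#}=\Id$ forces injectivity of $\pi^{\#}$ on every cohomology group, is sound and matches the logical skeleton of the paper's argument. But your proposal stops exactly where the real content of the lemma begins: you never construct $t$, and you yourself flag its construction as the main obstacle, to be ``extracted by hand from the projective-bundle geometry and the vanishing computations'' of Lemma \ref{key-iso}. That route does not obviously work. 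Knowing that $R^{i}\pi_{\ast}\Omega_{\tilde{X}}^{p}(\tilde{\V})$ for $i\geq 1$ is supported on $Z$ identifies $\tau_{\geq 1}R\pi_{\ast}\Omega_{\tilde{X}}^{p}(\tilde{\V})$, but it gives no splitting of the truncation triangle and no reason for the incoming differentials $d_{r}\colon E_{r}^{q-r,r-1}\to E_{r}^{q,0}$ to vanish: support or codimension considerations alone do not kill maps $H^{q-r}(X,\iota_{\ast}(\cdot))\to H^{q}(X,\Omega_{X}^{p}(\V))$, and degeneration of the bottom row is precisely what has to be proved.

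The missing idea---which is Deligne's actual argument in \cite[Proposition 4.3]{Del68}, adapted in the paper---is to produce $t$ as a trace map via Grothendieck duality, with no spectral sequence at all. Since $\Omega_{X}^{n-p}(\V)$ is locally free, pullback gives a morphism $\pi^{\ast}\Omega_{X}^{n-p}(\V)=L\pi^{\ast}\Omega_{X}^{n-p}(\V)\to\Omega_{\tilde{X}}^{n-p}(\tilde{\V})$; applying $R\mathcal{H}om_{\tilde{X}}(-,\tilde{\V}\otimes\omega_{\tilde{X}})$ and using $\Omega_{\tilde{X}}^{p}(\tilde{\V})\cong R\mathcal{H}om_{\tilde{X}}(\Omega_{\tilde{X}}^{n-p},\tilde{\V}\otimes\omega_{\tilde{X}})$ together with $\pi^{!}\Omega_{X}^{p}(\V)=R\mathcal{H}om_{\tilde{X}}(L\pi^{\ast}\Omega_{X}^{n-p},\tilde{\V}\otimes\omega_{\tilde{X}})$, one obtains a morphism $\Omega_{\tilde{X}}^{p}(\tilde{\V})\to\pi^{!}\Omega_{X}^{p}(\V)$; the adjunction between $R\pi_{\ast}$ and $\pi^{!}$ then converts this into $\mathrm{Tr}\colon R\pi_{\ast}\Omega_{\tilde{X}}^{p}(\tilde{\V})\to\Omega_{X}^{p}(\V)$. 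The composite $\mathrm{Tr}\circ\pi^{\#}$ is a morphism between complexes concentrated in degree $0$, hence a morphism of sheaves, and it restricts to the identity on the dense open set $X-Z$; since $\Omega_{X}^{p}(\V)$ is locally free on the integral variety $X$, it is the identity everywhere, giving the desired retraction. This construction is purely sheaf-theoretic and characteristic-free, which is exactly why the lemma survives in positive characteristic: no decomposition theorem and no hand-made splitting over $Z$ is needed. Without this (or an equivalent) construction of $t$, your proposal is a correct reduction but not a proof.
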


\begin{proof}
Note that $\Omega_{X}^{n-p}(\V)$ is a locally free sheaf over $X$.
The pullback $\pi^{\ast}$ induces a natural morphism
\begin{equation}\label{Del1}
\pi^{\ast}\Omega_{X}^{n-p}(\V)
=
L\pi^{\ast}\Omega_{X}^{n-p}(\V)
\longrightarrow
\Omega_{\tilde{X}}^{n-p}(\tilde{\V}).
\end{equation}
Recall the definition of Grothendieck's duality functor $\pi^{!}$.
For any object $\E$ in the bounded derived category of coherent sheaves $\D^{b}(\Coh(X))$, the duality of $\E$ is defined to be
\begin{equation*}
\pi^{!}\E
:=
\omega_{\tilde{X}}\otimes \pi^{\ast}\omega_{X}^{\vee}\otimes L\pi^{\ast}\E
\cong
R\mathcal{H}om_{\tilde{X}}(L\pi^{\ast}\omega_{X}, L\pi^{\ast}\E\otimes \omega_{\tilde{X}}),
\end{equation*}
which is an object in $\D^{b}(\Coh(\tilde{X}))$.
Set $\E=\Omega_{X}^{p}(\V)$ and then we have
\begin{equation}\label{Del2}
\pi^{!}\Omega_{X}^{p}(\V)
=
R\mathcal{H}om_{\tilde{X}}(L\pi^{\ast}\Omega_{X}^{n-p}, \tilde{\V}\otimes \omega_{\tilde{X}}).
\end{equation}
According to the canonical isomorphisms
$$\label{Del3}
\Omega_{\tilde{X}}^{p}(\tilde{\V})
\cong (\Omega_{\tilde{X}}^{n-p})^{\vee}\otimes\tilde{\V}\otimes \omega_{\tilde{X}}
\cong R\mathcal{H}om_{\tilde{X}}(\Omega_{\tilde{X}}^{n-p}, \tilde{\V}\otimes \omega_{\tilde{X}})
$$
and also \eqref{Del2},
the morphism \eqref{Del1} gives rise to a morphism
\begin{equation}\label{Del4}
\Omega_{\tilde{X}}^{p}(\tilde{\V})
\longrightarrow
\pi^{!}\Omega_{X}^{p}(\V).
\end{equation}
In particular, since $\pi^{!}$ is the right adjoint functor of $R\pi_{\ast}$ there is a natural isomorphism
$$\label{Del5}
\mathrm{Hom}(R\pi_{\ast}\Omega_{\tilde{X}}^{p}(\tilde{\V}),
\Omega_{X}^{p}(\V))
\cong
\mathrm{Hom}(\Omega_{\tilde{X}}^{p}(\tilde{\V}), \pi^{!}\Omega_{X}^{p}(\V)).
$$
Consequently, there exists a morphism
$\mathrm{Tr}: R\pi_{\ast}\Omega_{\tilde{X}}^{p}(\tilde{\V}) \longrightarrow\Omega_{X}^{p}(\V)$ corresponding to the morphism \eqref{Del4};
furthermore, we have a composition morphism
\begin{equation}\label{Del6}
\mathrm{Tr}\circ \pi^{\#}:
\Omega_{X}^{p}(\V)
\longrightarrow
R\pi_{\ast}\pi^{-1} \Omega_{X}^{p}(\V)
\longrightarrow
R\pi_{\ast}\Omega_{\tilde{X}}^{p}(\tilde{\V})
\longrightarrow
\Omega_{X}^{p}(\V)
\end{equation}
in the derived category $\D^{+}(\CO_{X})$, and hence it is a morphism of locally free sheaves.
Note that $\tilde{X}-E$ is isomorphic to $X-Z$ under the blow-up morphism $\pi$.
It follows that the morphism \eqref{Del6} is the \emph{identity} on the dense open subset $X-Z$.
 As a result, the induced morphism of cohomology
\begin{equation*}\label{Del7}
\mathrm{Tr}\circ \pi^{\#}:
H^{q}(X, \Omega_{X}^{p}(\V))
\longrightarrow
H^{q}(\tilde{X}, \Omega_{\tilde{X}}^{p}(\tilde{\V}))
\longrightarrow
H^{q}(X, \Omega_{X}^{p}(\V))
\end{equation*}
is the identity and thus we are led to the conclusion that the morphism $\pi^{\#}$ is injective.
\end{proof}

Now we are in a position to state the abstract Hodge blow-up formula.
From Lemma \ref{injective-lem} and Theorem \ref{key-tech-prop},
the Four Lemma implies that $\rho^{\#}$ in \eqref{final-diagram} is injective too.
Note that \eqref{final-diagram} is a commutative  ladder of finite-dimensional $k$-vector spaces.
A standard diagram-chasing, such as \cite[Proposition 5.1]{RYY19b}, shows that $\tilde{\iota}^{\#}$ in \eqref{final-diagram} induces an isomorphism of $k$-vector spaces:
$$
\mathrm{coker}\,\pi^{\#}\cong\mathrm{coker}\,\rho^{\#},
$$
and thus by the commutative ladder \eqref{final-diagram}, we have the abstract blow-up formula:
\begin{equation}\label{abstrct-iso}
H^{q}(\tilde{X},\Omega_{\tilde{X}}^{p}(\tilde{\V}))
\cong H^{q}(X,\Omega_{X}^{p}(\V))\oplus \mathrm{coker}\,\rho^{\#}.
\end{equation}

\paragraph{\textbf{Step 3}}
We will give the projective bundle formula of Hodge cohomology of locally free sheaves which is indeed well-known to experts.
Consider the projective bundle $\rho:E\longrightarrow Z$.
Then there is a canonical isomorphism
\begin{equation}\label{essetial-pbformula}
\bigoplus_{i=0}^{c-1} \Omega_{Z}^{p-i}[-i]
\stackrel{\simeq}\longrightarrow
R\rho_{\ast}\Omega_{E}^{p}.
\end{equation}
In fact, one can show this by essentially using the higher direct images of the relative sheaf $\Omega_{E/Z}^{i}$ (\cite[XI, Theorem 1.1]{DK73}); for example, see \cite[page 22, (4.2.7)]{Gros85} for more details.
In general, let $\mathcal{W}$ be a locally free sheaf over $Z$.
Twist \eqref{essetial-pbformula} with $\mathcal{W}$ and then
the projection formula leads a canonical isomorphism
\begin{equation}\label{essetial-pbformula2}
\bigoplus_{i=0}^{c-1} \Omega_{Z}^{p-i}\otimes\mathcal{W}[-i]
\stackrel{\simeq}\longrightarrow
R\rho_{\ast}(\Omega_{E}^{p}\otimes\rho^{\ast}\mathcal{W}).
\end{equation}
Taking cohomology $H^{q}(Z, -)$ of \eqref{essetial-pbformula2},
we get the following isomorphisms
\begin{equation*}\label{proj-bundle}
\begin{aligned}
H^{q}(E, \Omega_{E}^{p}\otimes \rho^{\ast}\mathcal{W})
&\cong&
H^{q}(Z, R\rho_{\ast}(\Omega_{E}^{p}\otimes \rho^{\ast}\mathcal{W}))  \\
&\cong&
H^{q}(Z, \bigoplus_{i=0}^{c-1} \Omega_{Z}^{p-i}\otimes\mathcal{W}[-i])\\
&\cong&
\bigoplus_{i=0}^{c-1} H^{q-i}(Z, \Omega_{Z}^{p-i}\otimes\mathcal{W}).
\end{aligned}
\end{equation*}
Set $\mathcal{W}=\iota^{\ast}\V$ and then together with the abstract blow-up formula \eqref{abstrct-iso}, we are led to the final explicit blow-up formula
$$
H^{q}(\tilde{X},\Omega_{\tilde{X}}^{p}(\tilde{\V}))
\cong H^{q}(X,\Omega_{X}^{p}(\V))\oplus
\bigoplus_{i=1}^{c-1} H^{q-i}(Z, \Omega_{Z}^{p-i}\otimes\iota^{\ast}\V).
$$

\begin{rem}
It is of importance to notice that Lemma \ref{injective-lem} is slightly different from Deligne \cite[Proposition 4.3]{Del68}.
In \cite[Proposition 4.3]{Del68}, Deligne considered the induced morphism
$$
\pi^{\ast}:
H^{q}(X, \Omega_{X}^{p})
\rightarrow
H^{q}(\tilde{X}, \pi^{\ast}\Omega_{X}^{p})
\rightarrow
H^{q}(\tilde{X}, \Omega_{\tilde{X}}^{p}).
$$
The morphism above seems not compatible with the diagram \eqref{final-diagram} very well.
The main reason lies in the fact that the inverse image functor $\pi^{\ast}$ is not exact
and $\pi^{\ast}\Omega_{X}^{p}$ is not isomorphic to
$\pi^{-1}\Omega_{X}^{p}$ in general.
\end{rem}


\section{Applications}\label{Appl.}

In this section, we mainly focus on the applications of Theorem \ref{mainthm} in positive characteristic to the blow-up invariance of the degeneracy of spectral sequences.

Let us fix several notations. Here $X$ is always a smooth proper variety over an algebraically closed field $k$ of positive characteristic.
Let $\iota:Z\hookrightarrow X$ be a smooth closed subvariety of codimension $c\geq 2$
and $\pi: \tilde{X}\longrightarrow X$ the blow-up of $X$ along $Z$.
In the Subsections \ref{Hdg-bundle} and \ref{Alg.-deRham}, the index $l$ will always denote an arbitrary nonnegative integer in $[0, 2 \dim X]$.
\subsection{Hodge cohomology of locally free sheaves}\label{Hdg-bundle}
We will obtain the blow-up formula of total Hodge cohomology of locally free sheaves and discuss its applications.

\begin{defn}
Let $X$ be a smooth proper variety and $\V$ a locally free sheaf on $X$.
Denote by
$$
H_{\Hdg}^{l}(X; \V):=\bigoplus_{p+q=l} H^{q}(X, \Omega_{X}^{p}\otimes \V)
$$
the {\it $l$-th total Hodge cohomology of $X$ with coefficients in $\V$}.
In particular, if $\V=\CO_{X}$,
then we call $H_{\Hdg}^{l}(X):=H_{\Hdg}^{l}(X; \CO_{X})$ the {\it $l$-th total Hodge cohomology} of $X$.
Notice that the term \lq\lq total Hodge cohomology" here is often called \lq\lq Hodge cohomology" in many other literatures.
\end{defn}

Recently,
Achinger--Zdanowicz \cite{AZ17} obtained the {\it blow-up formula of total Hodge cohomology} by using Voevodsky's {\it blow-up formula of motives} (cf. \cite[(3.5.3)]{Voe00}).

\begin{prop}[{\cite[Corollary 2.8.(4)]{AZ17}}]\label{totalHodge-blowup}
There is an isomorphism
\begin{equation*}
H_{\Hdg}^{l}(\tilde{X})
\cong
H_{\Hdg}^{l}(X)
\oplus
\bigoplus_{i=1}^{c-1} H_{\Hdg}^{l-2i}(Z)
\end{equation*}
of total Hodge cohomology.
\end{prop}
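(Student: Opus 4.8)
The plan is to derive this total Hodge cohomology formula as an immediate consequence of the $(p,q)$-graded blow-up formula already established in Corollary \ref{mainthm2}. By definition the total Hodge cohomology is the direct sum of the individual Hodge cohomology groups along the anti-diagonal $p+q=l$, so the idea is simply to apply Corollary \ref{mainthm2} summand-by-summand and then collect terms. Concretely, I would start from
\[
H_{\Hdg}^{l}(\tilde{X})
=
\bigoplus_{p+q=l} H^{q}(\tilde{X}, \Omega_{\tilde{X}}^{p})
\]
and substitute the isomorphism of Corollary \ref{mainthm2} into each summand to obtain
\[
H_{\Hdg}^{l}(\tilde{X})
\cong
\Bigl(\bigoplus_{p+q=l} H^{q}(X, \Omega_{X}^{p})\Bigr)
\oplus
\Bigl(\bigoplus_{p+q=l}\bigoplus_{i=1}^{c-1} H^{q-i}(Z, \Omega_{Z}^{p-i})\Bigr).
\]
The first bracket is exactly $H_{\Hdg}^{l}(X)$ by definition, so the only remaining work is to rewrite the second bracket.

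Next I would interchange the two finite direct sums and, for each fixed $i$ with $1\leq i\leq c-1$, reindex via $a=p-i$ and $b=q-i$. Since $\Omega_{Z}^{p-i}=0$ whenever $p-i<0$ and $H^{q-i}(Z,-)=0$ whenever $q-i<0$, the surviving summands are precisely those with $a,b\geq 0$ and
\[
a+b=(p+q)-2i=l-2i.
\]
Hence for each $i$ one gets
\[
\bigoplus_{p+q=l} H^{q-i}(Z, \Omega_{Z}^{p-i})
\cong
\bigoplus_{a+b=l-2i} H^{b}(Z, \Omega_{Z}^{a})
=
H_{\Hdg}^{l-2i}(Z),
\]
and summing over $1\leq i\leq c-1$ produces the stated formula.

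There is no substantive obstacle in this argument once Corollary \ref{mainthm2} is available, as all the cohomological content is already contained in the $(p,q)$-graded statement; the derivation is purely formal bookkeeping. The one point deserving care is the degree shift, namely that the contribution of the center $Z$ appears in degree $l-2i$ rather than $l-i$: this reflects that passing to the exceptional contribution lowers both the form-degree $p$ and the cohomological degree $q$ by $i$, so the total degree drops by $2i$. I would flag this shift explicitly, since it is exactly the feature that matches the expected blow-up behavior (and, in the complex case, the even-degree shift of the projective-bundle/blow-up formula for cohomology).
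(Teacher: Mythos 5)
Your proof is correct: the summand-by-summand substitution of Corollary \ref{mainthm2}, the interchange of finite direct sums, and the reindexing $a=p-i$, $b=q-i$ (with the vanishing of summands having $a<0$ or $b<0$) are all in order, and you correctly identify the total degree shift $l-2i$. It does, however, differ from how the paper handles this exact statement: the paper does not derive Proposition \ref{totalHodge-blowup} from Corollary \ref{mainthm2}, but quotes it from Achinger--Zdanowicz \cite{AZ17}, whose proof goes through Voevodsky's blow-up formula for motives (building on Chatzistamatiou--R\"{u}lling \cite{CR11}) --- machinery entirely independent of this paper's relative Hodge sheaves. That said, your argument is precisely the one the paper does use one step later for its generalization to coefficients in a locally free sheaf (Proposition \ref{twisted-totalHodge-blowup}), whose entire proof is that the statement follows from Theorem \ref{mainthm} by the definition of total Hodge cohomology; your proof is exactly the specialization $\V=\CO_{X}$ of that. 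As for what each route buys: the motivic argument of \cite{AZ17} requires no knowledge of the individual groups $H^{q}(\tilde{X},\Omega_{\tilde{X}}^{p})$ and predates the graded formula, whereas your derivation exhibits the total formula as a purely formal consequence of the finer $(p,q)$-graded blow-up formula, which is strictly stronger information (the total formula alone does not determine the individual Hodge numbers), and it stays entirely within the sheaf-theoretic framework developed in this paper.
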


As an application of Theorem \ref{mainthm},
we generalize this result to be as follows.

\begin{prop}\label{twisted-totalHodge-blowup}
There exists an isomorphism
\begin{equation*}
H_{\Hdg}^{l}(\tilde{X}; \pi^{\ast}\V)
\cong
H_{\Hdg}^{l}(X; \V)
\oplus
\bigoplus_{i=1}^{c-1} H_{\Hdg}^{l-2i}(Z; \iota^{\ast}\V)
\end{equation*}
of total Hodge cohomology of locally free sheaves.
\end{prop}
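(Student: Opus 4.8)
The plan is to deduce this total version directly from the pointwise blow-up formula \eqref{bundle-blowup-formula} of Theorem \ref{mainthm}, by taking the direct sum over all Hodge bidegrees with $p+q=l$ and re-indexing the contribution coming from the center $Z$. First I would unravel the definition: by definition of total Hodge cohomology,
$$
H_{\Hdg}^{l}(\tilde{X}; \pi^{\ast}\V) = \bigoplus_{p+q=l} H^{q}(\tilde{X}, \Omega_{\tilde{X}}^{p}\otimes \pi^{\ast}\V).
$$
Applying Theorem \ref{mainthm} to each summand and collecting the first summands yields, on the one hand, $\bigoplus_{p+q=l} H^{q}(X,\Omega_{X}^{p}\otimes\V) = H_{\Hdg}^{l}(X;\V)$, and on the other hand the contribution of the center
$$
\bigoplus_{p+q=l}\ \bigoplus_{i=1}^{c-1} H^{q-i}(Z, \Omega_{Z}^{p-i}\otimes\iota^{\ast}\V).
$$

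The only point requiring care is the re-indexing of this double sum. For each fixed $i$, I would substitute $p'=p-i$ and $q'=q-i$, so that the constraint $p+q=l$ becomes $p'+q'=l-2i$; the terms with $p-i<0$ or $q-i<0$ vanish and correspond precisely to the empty range of the shifted indices (and likewise $\Omega_{Z}^{p'}=0$ for $p'>\dim Z$ is automatically absorbed). Hence the inner sum over $p+q=l$ reproduces $H_{\Hdg}^{l-2i}(Z;\iota^{\ast}\V)$, and interchanging the order of summation gives $\bigoplus_{i=1}^{c-1} H_{\Hdg}^{l-2i}(Z;\iota^{\ast}\V)$. Combining the two pieces produces the claimed isomorphism.

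There is no serious obstacle here: the proposition is a formal consequence of Theorem \ref{mainthm}, and all the substance of the argument is already contained in the pointwise formula. The only thing genuinely to verify is that the diagonal shift $(p,q)\mapsto(p-i,q-i)$ lowers the total degree by exactly $2i$ and that the vanishing of out-of-range cohomology groups keeps the index bookkeeping consistent, which is precisely what makes the shifted grading on $Z$ land in degree $l-2i$ rather than $l-i$.
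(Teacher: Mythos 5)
Your proof is correct and is essentially the paper's own argument: the paper simply states that the proposition follows directly from Theorem \ref{mainthm} and the definition of total Hodge cohomology, leaving implicit the diagonal re-indexing $(p,q)\mapsto(p-i,q-i)$ that you spell out. Your careful verification that this shift lowers total degree by exactly $2i$ is the (routine) bookkeeping the paper omits, so there is nothing to add or correct.
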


\begin{proof}
By the definition of total Hodge cohomology of locally free sheaves,
this is a direct consequence of Theorem \ref{mainthm}.
\end{proof}

Furthermore,
we will apply Proposition \ref{twisted-totalHodge-blowup} to the following interesting problem under blow-ups.
\begin{prob}[{cf. \cite[Question 2.1]{EO09}}] \label{EO-quest}
Let $X$ be a smooth projective variety over $k$.
Let $L$ be an invertible sheaf on $X$ and $m$ a positive integer such that $L^{\otimes m}\cong \CO_{X}$.
Is
\begin{equation}\label{ques-5.4}
\dim\, H_{\Hdg}^{l}(X; L^{\otimes j})=\dim\, H_{\Hdg}^{l}(X; L)
\end{equation}
for every $j$ relatively prime to $m$?
\end{prob}

\begin{rem}
In \cite[Proposition 3.5]{PR04},
in the case of $\mathrm{char}(k)=0$, Pink--Roessler showed that the answer to Problem \ref{EO-quest} is affirmative; see also a different proof of Esnault--Ogus \cite[Proposition 2.2]{EO09}.
In positive characteristic, Pink--Roessler posed \cite[Conjecture 5.1]{PR04}:
if $\dim\, X\leq \mathrm{char}(k)$ and $X$ is liftable over the ring $W_{2}(k)$ of $2$-Witt vectors, then Problem \ref{EO-quest} is true.
They proved it in this case for $(m, \mathrm{char}(k))=1$ (\cite[Theorem 3.2]{PR04}); see for example \cite{Cuo10} for a higher  dimensional generalization.
\end{rem}

In \cite[Remark 3.9]{EO09}, it was illustrated that, by Riemann--Roch theorem, Problem \ref{EO-quest} holds for smooth curves without any assumption.
In higher dimensional cases,
so far, we merely know that there is a positive answer of Esnault--Ogus \cite[Theorem 3.6]{EO09} for $m=\mathrm{char}(k)$ and $X$ ordinary.
In general, this problem seems difficult to be handled.

As an application of Proposition \ref{twisted-totalHodge-blowup},
one gets the following observation.

\begin{lem}\label{blowup-EO-quest}
With the same assumptions as in Problem \ref{EO-quest}, if the equality \eqref{ques-5.4} holds for $(X, L)$ and $(Z, \iota^{\ast}L)$,
then so does for $(\tilde{X}, \pi^{\ast}L)$.
\end{lem}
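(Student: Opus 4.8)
The plan is to deduce the statement directly from the blow-up formula of total Hodge cohomology of locally free sheaves, namely Proposition \ref{twisted-totalHodge-blowup}, applied to the twisting sheaf $L^{\otimes j}$. Since the only input beyond that proposition is the multiplicativity of pullback, the argument will be essentially formal.

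First I would check that the hypotheses of Question \ref{EO-quest} are inherited by the pullbacks. As pullback is compatible with tensor products, $(\pi^{\ast}L)^{\otimes m}\cong\pi^{\ast}(L^{\otimes m})\cong\pi^{\ast}\CO_{X}\cong\CO_{\tilde{X}}$, so $(\tilde{X},\pi^{\ast}L)$ is again of the type considered in Question \ref{EO-quest} with the \emph{same} integer $m$; likewise $(\iota^{\ast}L)^{\otimes m}\cong\CO_{Z}$ makes $(Z,\iota^{\ast}L)$ admissible. Moreover $\pi^{\ast}(L^{\otimes j})\cong(\pi^{\ast}L)^{\otimes j}$ and $\iota^{\ast}(L^{\otimes j})\cong(\iota^{\ast}L)^{\otimes j}$ for every $j$, which is what allows me to feed $\V=L^{\otimes j}$ into Proposition \ref{twisted-totalHodge-blowup}.

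Next, fixing $j$ coprime to $m$, I would apply Proposition \ref{twisted-totalHodge-blowup} with $\V=L^{\otimes j}$ to obtain
\begin{equation*}
\dim H_{\Hdg}^{l}(\tilde{X}; (\pi^{\ast}L)^{\otimes j})
=\dim H_{\Hdg}^{l}(X; L^{\otimes j})
+\sum_{i=1}^{c-1}\dim H_{\Hdg}^{l-2i}(Z; (\iota^{\ast}L)^{\otimes j}).
\end{equation*}
By the assumed validity of \eqref{ques-5.4} for $(X,L)$ and for $(Z,\iota^{\ast}L)$ (the latter invoked in each degree $l-2i$), every term on the right-hand side is unchanged when $j$ is replaced by $1$. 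Applying Proposition \ref{twisted-totalHodge-blowup} once more with $\V=L$ then identifies the resulting sum with $\dim H_{\Hdg}^{l}(\tilde{X}; \pi^{\ast}L)$, which yields the equality \eqref{ques-5.4} for $(\tilde{X},\pi^{\ast}L)$.

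I do not expect a serious obstacle here: the conclusion is a formal consequence of Proposition \ref{twisted-totalHodge-blowup} and the monoidal nature of $\pi^{\ast}$ and $\iota^{\ast}$. The only points deserving a word of care are, first, that $\tilde{X}$ is again smooth projective so that Question \ref{EO-quest} makes sense for it (this holds because $\pi$ is projective and $X$ is projective), and, second, the bookkeeping on the shifted degrees $l-2i$; any such index falling outside the admissible range $[0,2\dim X]$ simply contributes a zero term on both sides and causes no difficulty.
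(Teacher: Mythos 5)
Your proof is correct and is essentially the paper's own argument: both apply Proposition \ref{twisted-totalHodge-blowup} with $\V=L^{\otimes j}$ and then with $\V=L$, using the hypothesis on $(X,L)$ and on $(Z,\iota^{\ast}L)$ (in the shifted degrees $l-2i$) to replace $j$ by $1$ term by term. The only cosmetic difference is the direction in which the torsion condition is transferred: the paper starts from $(\pi^{\ast}L)^{\otimes m}\cong\CO_{\tilde{X}}$ and deduces $L^{\otimes m}\cong\CO_{X}$ (hence $(\iota^{\ast}L)^{\otimes m}\cong\CO_{Z}$) via the projection formula and $\pi_{\ast}\CO_{\tilde{X}}\cong\CO_{X}$, whereas you pull the condition back from $X$; both are valid and lead to the same computation.
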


\begin{proof}
Suppose that there is a positive integer $m$ such that $(\pi^{\ast}L)^{\otimes m}\cong \CO_{\tilde{X}}$.
By the projection formula,
we have
$$
\CO_{X}\cong \pi_{\ast}\CO_{\tilde{X}}\cong \pi_{\ast} \pi^{\ast}L^{\otimes m}
\cong L^{\otimes m}\otimes \pi_{\ast}\CO_{\tilde{X}}\cong L^{\otimes m}
$$
and hence $(\iota^{\ast}L)^{\otimes m}\cong \CO_{Z}$.
By the hypothesis that \eqref{ques-5.4} holds for $(X, L)$ and $(Z, \iota^{\ast}L)$,
Proposition \ref{twisted-totalHodge-blowup} yields
\begin{eqnarray*}
\dim\, H_{\Hdg}^{l}(\tilde{X}; (\pi^{\ast}L)^{\otimes j})
&=&
\dim\, H_{\Hdg}^{l}(X; L^{\otimes j})+\sum_{i=1}^{c-1}\dim\, H_{\Hdg}^{l-2i}(Z; (\iota^{\ast}L)^{\otimes j}) \\
&=& \dim\, H_{\Hdg}^{l}(X; L)+\sum_{i=1}^{c-1}\dim\, H_{\Hdg}^{l-2i}(Z; \iota^{\ast}L) \\
&=& \dim\, H_{\Hdg}^{l}(\tilde{X}; \pi^{\ast}L)
\end{eqnarray*}
for relatively prime  $j$ and $m$.
Hence, the equality \eqref{ques-5.4} holds for $(\tilde{X}, \pi^{\ast}L)$.
\end{proof}

Using this lemma,
one can construct many new examples such that the equality \eqref{ques-5.4} holds.

\begin{example}\label{eg-eo}
With $X$ as in the example of \cite[Theorem 3.6]{EO09},
if $L^{\otimes \mathrm{char}(k)}\cong \CO_{X}$,
Lemma \ref{blowup-EO-quest} implies that
\eqref{ques-5.4} holds for the blow-up $(\tilde{X}, \pi^{\ast}L)$ of $(X, L)$ at points or smooth curves.
\end{example}

Specifically, in the three-dimensional case, one has
\begin{cor}
Let $X$ be a smooth projective threefold and $L$ an invertible sheaf on $X$.
Then the equality \eqref{ques-5.4} holds for $(\tilde{X}, \pi^{\ast}L)$ if and only if it holds for $(X, L)$.
\end{cor}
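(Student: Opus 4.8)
The plan is to reduce everything to the observation that, on a threefold, the center of a blow-up is forced to be a point or a smooth curve, pairs for which Question \ref{EO-quest} is already settled. First I would note that since $\dim X = 3$ and the codimension satisfies $c \geq 2$, we have $\dim Z = 3 - c \in \{0, 1\}$, so $Z$ is a closed point (when $c = 3$) or a smooth projective curve (when $c = 2$). For a point the equality \eqref{ques-5.4} is trivial, while for a smooth curve it holds unconditionally by Riemann--Roch, as recorded in \cite[Remark 3.9]{EO09}; hence \eqref{ques-5.4} holds for $(Z, \iota^{\ast}L)$ in either case. I would also record, exactly as in the proof of Lemma \ref{blowup-EO-quest}, that $L$ is $m$-torsion if and only if $\pi^{\ast}L$ is (via the projection formula and $\pi_{\ast}\CO_{\tilde{X}} \cong \CO_X$), so all three pairs share the same order $m$ and the same admissible exponents $j$.

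With this in hand, the ``if'' direction is immediate: assuming \eqref{ques-5.4} for $(X, L)$, it then holds for both $(X, L)$ and $(Z, \iota^{\ast}L)$, and Lemma \ref{blowup-EO-quest} delivers the conclusion for $(\tilde{X}, \pi^{\ast}L)$.

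For the converse, I would apply the blow-up formula of Proposition \ref{twisted-totalHodge-blowup} to each twist $\V = L^{\otimes j}$, using that $(\pi^{\ast}L)^{\otimes j} = \pi^{\ast}(L^{\otimes j})$ and $(\iota^{\ast}L)^{\otimes j} = \iota^{\ast}(L^{\otimes j})$, to obtain for every $j$ coprime to $m$
\begin{equation*}
\dim H_{\Hdg}^{l}(\tilde{X}; (\pi^{\ast}L)^{\otimes j})
= \dim H_{\Hdg}^{l}(X; L^{\otimes j})
+ \sum_{i=1}^{c-1} \dim H_{\Hdg}^{l-2i}(Z; (\iota^{\ast}L)^{\otimes j}).
\end{equation*}
Since \eqref{ques-5.4} holds for $(Z, \iota^{\ast}L)$, each summand on the right is independent of $j$; subtracting this identity from its $j=1$ analogue and invoking the hypothesis $\dim H_{\Hdg}^{l}(\tilde{X}; (\pi^{\ast}L)^{\otimes j}) = \dim H_{\Hdg}^{l}(\tilde{X}; \pi^{\ast}L)$ cancels the $Z$-contributions and leaves $\dim H_{\Hdg}^{l}(X; L^{\otimes j}) = \dim H_{\Hdg}^{l}(X; L)$, which is precisely \eqref{ques-5.4} for $(X, L)$.

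I do not expect a genuine obstacle here: the entire content is the structural remark that the center is low-dimensional, which makes the $Z$-term in the blow-up formula transparent to coprime twisting and turns the equivalence into a direct cancellation. The only point requiring care is verifying that Proposition \ref{twisted-totalHodge-blowup} applies verbatim to each $L^{\otimes j}$ and that the torsion order is preserved under $\pi^{\ast}$ and $\iota^{\ast}$, both of which are routine.
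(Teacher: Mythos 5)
Your proof is correct and follows essentially the same route as the paper: the forward direction invokes Lemma \ref{blowup-EO-quest} after noting that the center of a blow-up on a threefold is a point or a smooth curve (for which \eqref{ques-5.4} is already known), and the converse applies Proposition \ref{twisted-totalHodge-blowup} to each coprime twist and cancels the $j$-independent $Z$-contributions. The only difference is cosmetic: you spell out the preservation of the torsion order under $\pi^{\ast}$ and $\iota^{\ast}$, which the paper leaves implicit in this corollary (it is checked inside the proof of Lemma \ref{blowup-EO-quest}).
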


\begin{proof}
If the equality \eqref{ques-5.4} holds for $(X, L)$, then it holds for $(\tilde{X}, \pi^{\ast}L)$ by Lemma \ref{blowup-EO-quest} since it holds for points and smooth curves as shown in Example \ref{eg-eo}.

Conversely, suppose that \eqref{ques-5.4} holds for $(\tilde{X}, \pi^{\ast}L)$ and $(\pi^{\ast}L)^{\otimes m}\cong \CO_{\tilde{X}}$.
Recall again that \eqref{ques-5.4} holds for points and smooth curves, and thus Proposition \ref{twisted-totalHodge-blowup} gives
\begin{eqnarray*}
\dim\, H_{\Hdg}^{l}(X; L^{\otimes j})
&=&
\dim\, H_{\Hdg}^{l}(\tilde{X}; (\pi^{\ast}L)^{\otimes j})-\sum_{i=1}^{c-1}\dim\, H_{\Hdg}^{l-2i}(Z; (\iota^{\ast}L)^{\otimes j}) \\
&=& \dim\, H_{\Hdg}^{l}(\tilde{X}; \pi^{\ast}L)-\sum_{i=1}^{c-1}\dim\, H_{\Hdg}^{l-2i}(Z; \iota^{\ast}L) \\
&=& \dim\, H_{\Hdg}^{l}(X; L)
\end{eqnarray*}
for relatively prime $j$ and $m$.
\end{proof}


\subsection{Algebraic de Rham cohomology}\label{Alg.-deRham}
Let $X$ be a smooth projective variety over $k$ and $\mathcal{E}$ an algebraic vector bundle on $X$ with an integrable connection.
We denote by $H_{\DR}^{l}(X/k; \mathcal{E})$ the {\it $l$-th algebraic de Rham cohomology} of $X$ with coefficients in $\E$ (see Grothendieck \cite{Gro66}).
We also have the {\it twisted Hodge--de Rham spectral sequence}
\begin{equation}\label{twisted-Hodge-to-de Rham}
E_{1}^{p,q}=H^{q}(X, \Omega_{X/k}^{p}\otimes \E)\Longrightarrow H_{\DR}^{p+q}(X/k; \E).
\end{equation}
Then the twisted Hodge--de Rham spectral sequence \eqref{twisted-Hodge-to-de Rham} degenerates at $E_{1}$ if and only if
$$
\dim\, H_{\Hdg}^{l}(X; \E)=\dim\, H_{\DR}^{l}(X/k; \E)
$$
for every integer $l\geq0$.
Naturally, we can ask the following:
\begin{prob}\label{alg-twist-deRham-quest}
Is there an isomorphism
\begin{equation}\label{ques-5.9}
H_{\DR}^{l}(\tilde{X}/ k; \pi^{\ast}\E)
\cong
H_{\DR}^{l}(X/k; \E)
\oplus
\bigoplus_{i=1}^{c-1} H_{\DR}^{l-2i}(Z/k; \iota^{\ast}\E)
\end{equation}
of algebraic de Rham cohomology in positive characteristic?
\end{prob}

\begin{rem}
Over the complex number field,
the answer to this problem is a consequence of Serre's GAGA, Grothendieck--Deligne's comparison theorem and \cite[Theorem 1.1]{CY19}.
\end{rem}

If the answer to Problem \ref{alg-twist-deRham-quest} is affirmative,
then Proposition \ref{twisted-totalHodge-blowup} yields the following.

\begin{cor}\label{cor1}
Suppose that \eqref{ques-5.9} holds.
Then the $E_{1}$-degeneracy of the  twisted Hodge--de Rham spectral sequence  \eqref{twisted-Hodge-to-de Rham}
holds for $(X, \E)$ and $(Z, \iota^{\ast}\E)$
if and only if
so does for $(\widetilde{X}, \pi^{\ast}\E)$.
\end{cor}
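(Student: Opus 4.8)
The plan is to reduce the statement to a dimension count, using the two blow-up formulae now available—the one for total Hodge cohomology of locally free sheaves (Proposition \ref{twisted-totalHodge-blowup}) and the assumed one for algebraic de Rham cohomology \eqref{ques-5.9}—together with the degeneration criterion recorded just before Question \ref{alg-twist-deRham-quest}: the twisted Hodge--de Rham spectral sequence \eqref{twisted-Hodge-to-de Rham} of a pair $(Y,\mathcal{F})$ degenerates at $E_{1}$ if and only if $\dim\, H_{\Hdg}^{l}(Y;\mathcal{F})=\dim\, H_{\DR}^{l}(Y/k;\mathcal{F})$ for every $l$. First I would note that the criterion applies verbatim to all three pairs $(X,\E)$, $(Z,\iota^{\ast}\E)$ and $(\tilde{X},\pi^{\ast}\E)$, since $\tilde{X}$ and $Z$ are again smooth projective and the pullbacks $\pi^{\ast}\E$, $\iota^{\ast}\E$ inherit integrable connections. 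I would then record the fundamental inequality $\dim\, H_{\DR}^{l}(Y/k;\mathcal{F})\leq \dim\, H_{\Hdg}^{l}(Y;\mathcal{F})$, valid for every $l$: for fixed $l$ the associated graded of $H_{\DR}^{l}$ with respect to the Hodge filtration is $\bigoplus_{p+q=l}E_{\infty}^{p,q}$, and each $E_{\infty}^{p,q}$ is a subquotient of $E_{1}^{p,q}=H^{q}(Y,\Omega_{Y}^{p}\otimes\mathcal{F})$, so the inequality follows by summing over $p+q=l$; equality for all $l$ is precisely the $E_{1}$-degeneracy.

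Next I would combine the two blow-up identities. For every $l$, Proposition \ref{twisted-totalHodge-blowup} gives the Hodge identity and the hypothesis \eqref{ques-5.9} gives the identical identity with $\DR$ in place of $\Hdg$; subtracting them yields
\begin{align*}
&\dim\, H_{\Hdg}^{l}(\tilde{X};\pi^{\ast}\E)-\dim\, H_{\DR}^{l}(\tilde{X}/k;\pi^{\ast}\E)\\
&\quad=\Big(\dim\, H_{\Hdg}^{l}(X;\E)-\dim\, H_{\DR}^{l}(X/k;\E)\Big)+\sum_{i=1}^{c-1}\Big(\dim\, H_{\Hdg}^{l-2i}(Z;\iota^{\ast}\E)-\dim\, H_{\DR}^{l-2i}(Z/k;\iota^{\ast}\E)\Big).
\end{align*}
By the fundamental inequality, every term on the right-hand side is nonnegative, and the left-hand side is the corresponding nonnegative defect for $\tilde{X}$. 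This single displayed identity carries both implications.

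For the ``if'' direction, $E_{1}$-degeneracy for $(X,\E)$ and $(Z,\iota^{\ast}\E)$ forces each summand on the right to vanish for every $l$, so the defect for $(\tilde{X},\pi^{\ast}\E)$ vanishes for all $l$, which by the criterion gives $E_{1}$-degeneracy for $\tilde{X}$. For the converse, $E_{1}$-degeneracy for $\tilde{X}$ makes the left-hand side zero for all $l$; since the right-hand side is a sum of nonnegative terms, each term must vanish individually. Letting $l$ range over all values yields $\dim\, H_{\Hdg}^{l}(X;\E)=\dim\, H_{\DR}^{l}(X/k;\E)$ for all $l$, hence degeneracy for $X$; and for each fixed $i$, letting $l$ range yields $\dim\, H_{\Hdg}^{m}(Z;\iota^{\ast}\E)=\dim\, H_{\DR}^{m}(Z/k;\iota^{\ast}\E)$ for all $m=l-2i$, hence degeneracy for $Z$. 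The argument is essentially formal once the two blow-up formulae are in hand; the only step requiring care—the modest main obstacle—is precisely this converse passage, where one must invoke the term-by-term nonnegativity of the defects to deduce the vanishing of each summand from the vanishing of the total, rather than merely equality of the totals.
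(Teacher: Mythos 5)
Your proposal is correct and is essentially the paper's own argument: the paper likewise subtracts the assumed de Rham blow-up formula \eqref{ques-5.9} from the Hodge blow-up formula of Proposition \ref{twisted-totalHodge-blowup}, observes that each resulting defect term has a fixed sign by the general inequality between de Rham and total Hodge dimensions, and concludes both directions from the vanishing of a sum of sign-definite terms. Your write-up merely makes explicit what the paper leaves terse (the subquotient justification of the inequality and the term-by-term vanishing in the converse direction), and it also silently corrects a typo in the paper's displayed identity, where the de Rham term for $Z$ should carry the index $l-2i$.
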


\begin{proof}
Suppose that the isomorphism \eqref{ques-5.9} holds.
Then, by Proposition \ref{twisted-totalHodge-blowup},
we have
{
\begin{eqnarray*}
&&\underbrace{\dim\,H_{\DR}^{l}(\tilde{X}/ k; \pi^{\ast}\E)-\dim\,H_{\Hdg}^{l}(\tilde{X}; \pi^{\ast}\E)}_{\leq 0}\\
&=&\underbrace{\dim\,H_{\DR}^{l}(X/ k; \E)-\dim\,H_{\Hdg}^{l}(X; \E)}_{\leq 0}
+
\sum_{i=1}^{c-1}\big( \underbrace{\dim\,H_{\DR}^{l}(Z/ k; \iota^{\ast}\E)-
\dim\,H_{\Hdg}^{l-2i}(Z; \iota^{\ast}\E)}_{\leq 0}\big).
\end{eqnarray*}
}
Hence, the corollary follows.
\end{proof}

Finally, we say a few words on the birational invariance for $E_{1}$-degeneracy of the Hodge--de Rham spectral sequence \eqref{Hodge-to-de Rham}.
In positive characteristic,
Mumford \cite{Mum61} gave several explicit examples of smooth projective surfaces
with non-closed global $1$-forms;
this means the exterior derivative
$$d:H^{0}(X,\Omega_{X}^{1})\longrightarrow H^{0}(X,\Omega_{X}^{2})$$ is non-zero,
which implies that the Hodge--de Rham spectral sequence \eqref{Hodge-to-de Rham} does not degenerate at $E_{1}$.
This also means that, in general, the $E_{1}$-degeneracy of Hodge--de Rham spectral sequence  \eqref{Hodge-to-de Rham} is not a birational property of smooth projective varieties of dimension $\geq 4$.

Furthermore, we have the following observation for smooth projective surfaces.

\begin{cor}
The $E_{1}$-degeneracy of the Hodge--de Rham spectral sequence  \eqref{Hodge-to-de Rham} is a birational property of smooth projective surfaces over $k$ of positive characteristic.
\end{cor}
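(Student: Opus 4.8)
The plan is to reduce the statement to the invariance of $E_{1}$-degeneracy under a single blow-up at a closed point, and then to propagate this through the classical factorization of birational maps between smooth projective surfaces. Recall that $E_{1}$-degeneracy of \eqref{Hodge-to-de Rham} for a smooth proper variety $Y$ is equivalent to the equalities $\dim H_{\Hdg}^{l}(Y)=\dim H_{\DR}^{l}(Y/k)$ for all $l$, since one always has $\dim H_{\DR}^{l}(Y/k)\leq \dim H_{\Hdg}^{l}(Y)$, with equality for every $l$ precisely when the spectral sequence degenerates. Thus it is natural to track the nonnegative defect $d_{l}(Y):=\dim H_{\Hdg}^{l}(Y)-\dim H_{\DR}^{l}(Y/k)$ and to show that it is unchanged under point blow-ups.

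So first I would fix a smooth projective surface $X$ and let $\pi:\tilde{X}\to X$ be the blow-up at a closed point $p$, so that the center $Z=\{p\}$ has codimension $c=2$. Applying Corollary \ref{mainthm2} with $Z$ a point gives $\dim H_{\Hdg}^{l}(\tilde{X})=\dim H_{\Hdg}^{l}(X)+\dim H_{\Hdg}^{l-2}(\mathrm{pt})$, while the blow-up formula \cite[Corollary 2.8.(3)]{AZ17} for algebraic de Rham cohomology gives $\dim H_{\DR}^{l}(\tilde{X}/k)=\dim H_{\DR}^{l}(X/k)+\dim H_{\DR}^{l-2}(\mathrm{pt}/k)$. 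Since a point degenerates trivially, one has $\dim H_{\Hdg}^{m}(\mathrm{pt})=\dim H_{\DR}^{m}(\mathrm{pt}/k)$ for all $m$, and subtracting the two displays yields $d_{l}(\tilde{X})=d_{l}(X)$ for every $l$. Hence $d_{l}(\tilde{X})=0$ for all $l$ if and only if $d_{l}(X)=0$ for all $l$; that is, $E_{1}$-degeneracy holds for $\tilde{X}$ if and only if it holds for $X$. This is the surface instance of \cite[Corollary 2.9.(1)]{AZ17}, but I record the explicit defect computation because it is exactly what survives the passage to the birational statement.

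Finally I would invoke the factorization of birational maps of surfaces, which, unlike the weak factorization theorem in higher dimensions, is available in arbitrary characteristic: given two birational smooth projective surfaces $X$ and $X'$, there exists a smooth projective surface $W$ together with birational morphisms $W\to X$ and $W\to X'$, each a finite composition of blow-ups at closed points. Applying the previous paragraph repeatedly along each tower shows that $E_{1}$-degeneracy holds for $W$ if and only if it holds for $X$, and likewise if and only if it holds for $X'$; therefore it holds for $X$ if and only if it holds for $X'$, which is the asserted birational invariance. The one genuinely essential input—and the reason the statement is confined to surfaces—is precisely this factorization theorem: in positive characteristic no analogue of the weak factorization theorem is known in higher dimensions, so the defect-comparison of the second paragraph cannot be propagated beyond the surface case by these methods.
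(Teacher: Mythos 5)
Your proposal is correct and follows essentially the same route as the paper: the paper's proof simply invokes the factorization of birational maps of smooth projective surfaces into point blow-ups and blow-downs (\cite[Chapter V, Theorem 5.5]{Har77}) together with the blow-up invariance of $E_{1}$-degeneracy \cite[Corollary 2.9.(1)]{AZ17}, which is exactly the invariance you re-derive by hand. Your explicit defect computation via Corollary \ref{mainthm2} and the de Rham blow-up formula \cite[Corollary 2.8.(3)]{AZ17} is precisely how the paper's introduction says that invariance is obtained, and your use of the common-resolution form of factorization rather than the zigzag form is an inessential variant.
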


\begin{proof}
Note that the weak factorization theorem holds for smooth projective surfaces, i.e.,
any birational map between smooth projective surfaces is factorized by finite sequences of blow-ups and blow-downs along points (cf. \cite[Chapter V, Theorem 5.5]{Har77}).
Therefore, the corollary follows from \cite[Corollary 2.9.(1)]{AZ17}.
\end{proof}

In positive characteristic,
it is known that the Hodge--de Rham spectral sequence degenerates at $E_{1}$ for smooth projective curves (see \cite{DL87}).
Hence, there is a natural problem for threefolds.

\begin{prob}
Is the $E_{1}$-degeneracy of the Hodge--de Rham spectral sequence \eqref{Hodge-to-de Rham} a birational property for smooth proper threefolds over $k$ of positive characteristic?
\end{prob}


\subsection{Hochschild--Kostant--Rosenberg theorem in positive characteristic}\label{Appl.-HKR}
In this subsection, we will consider the blow-up invariance of the Hochschild--Kostant--Rosenberg theorem in positive characteristic.

Throughout this subsection, the index $l$ denotes any integer in $[-\dim\, X,\ \dim\, X]$.
\begin{defn}
Let $X$ be a smooth proper variety over $k$.
The {\it Hochschild complex of $X$} is defined as
$$
\HC_{\bullet}(X):=L\Delta^{\ast}(\Delta_{\ast}\CO_{X})
$$
where $\Delta:X\longrightarrow X\times_{k} X$ is the diagonal.
For a locally free sheaf $\V$ on $X$, let
$$
\mathrm{HH}_{l}(X; \V):=H^{-l}(X, \HC_{\bullet}(X)\otimes \V)
$$
be the {\it Hochschild homology of $X$ with values in $\V$}.
In particular,
if $\V=\CO_{X}$, then
$$
\mathrm{HH}_{l}(X):=\mathrm{HH}_{l}(X; \CO_{X})
$$
is called the {\it Hochschild homology of $X$}.
\end{defn}

We have the {\it Hochschild--Kostant--Rosenberg spectral sequence} (for short {\it HKR spectral sequence})
$$
E_{2}^{p,q}=H^{q}(X, \Omega_{X}^{p})\Longrightarrow \mathrm{HH}_{p-q}(X),
$$
with the differential $d_{r}$ having bi-degree $(r-1, r)$.
Hence, with $h^{q}(X, \Omega_{X}^{p}):=\dim_k H^{q}(X, \Omega_{X}^{p})$, one has the following inequality
$$
\dim\, \mathrm{HH}_{l}(X)\leq \sum_{p-q=l}h^{q}(X, \Omega_{X}^{p})
$$
for any integer $l\in[-\dim\, X,\dim\, X]$; furthermore, the equality holds if and only if the HKR spectral sequence degenerates at $E_{2}$.
Following \cite{AV17}, we also say that $X$ satisfies the {\it weak HKR theorem} if the $E_{2}$-degeneracy of the HKR spectral sequence holds on $X$.
We say that $X$ satisfies the {\it strong HKR theorem}
if there exists an isomorphism
$$
\HC_{\bullet}(X)\cong \bigoplus_{p=0}^{\dim\, X} \Omega_{X}^{p}[p],
$$
in the derived category $\D^{b}(\Coh(X))$.
Hence, the strong HKR theorem for $X$ implies the weak HKR theorem on $X$.
Moreover, the strong HKR theorem implies the {\it Hochschild--Kostant--Rosenberg theorem}, namely, there is an isomorphism
$$
\mathrm{HH}_{l}(X)\cong\bigoplus_{p=0}^{\dim\, X} H^{p-l}(X, \Omega_{X}^{p})
$$
for every integer $-\dim\, X\leq l\leq \dim\, X$.
It is now well-known that the strong HKR theorem holds in characteristic zero (cf. \cite{Swa96,Yek02, TV11}).
In positive characteristic, there is a natural problem (cf. \cite[Question 1.1]{AV17}):

\begin{prob}\label{quest-HKR}
Is the strong HKR theorem true in positive characteristic?
\end{prob}

Partially, the answer to this problem is affirmed by \cite[Theorem 4.8]{Yek02} and \cite[Corollary 1.5]{AV17} as follows.

\begin{lem}\label{strong-HKR}
Let $X$ be a smooth proper variety over $k$.
If $\mathrm{char}(k)\geq \dim\, X$,
then $X$ satisfies the strong HKR theorem.
\end{lem}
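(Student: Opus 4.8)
The statement is precisely \cite[Theorem 4.8]{Yek02} and \cite[Corollary 1.5]{AV17}, so the plan is to indicate the mechanism behind the hypothesis $\mathrm{char}(k)\geq \dim\, X$ rather than to reproduce the full arguments. Writing $n=\dim\, X$ and $\Delta:X\hookrightarrow X\times_{k}X$ for the diagonal, the starting point is that $X$ smooth makes $\Delta$ a regular closed immersion with conormal sheaf $\Omega_{X}^{1}$. The Koszul resolution of $\Delta_{\ast}\CO_{X}$ then computes the cohomology sheaves of $\HC_{\bullet}(X)=L\Delta^{\ast}\Delta_{\ast}\CO_{X}$ as $\mathcal{H}^{-p}(\HC_{\bullet}(X))\cong\Omega_{X}^{p}$ for $0\leq p\leq n$, a local computation valid in every characteristic. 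Thus $\HC_{\bullet}(X)$ is a sheaf of commutative differential graded $\CO_{X}$-algebras concentrated in cohomological degrees $[-n,0]$ whose graded cohomology is $\bigoplus_{p=0}^{n}\Omega_{X}^{p}[p]$, and the entire content of the strong HKR theorem is the \emph{formality} of this complex, i.e.\ its splitting as the direct sum of its shifted cohomology sheaves inside $\D^{b}(\Coh(X))$.

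First I would construct the comparison morphism canonically. The identification $\Omega_{X}^{1}\cong\mathcal{H}^{-1}(\HC_{\bullet}(X))$ lifts to a morphism $\Omega_{X}^{1}[1]\to\HC_{\bullet}(X)$; since the source sits in odd degree, the commutative dg-algebra structure on $\HC_{\bullet}(X)$ extends it to an algebra map out of the free graded-commutative algebra, yielding
\[
\phi:\bigoplus_{p=0}^{n}\Omega_{X}^{p}[p]\longrightarrow \HC_{\bullet}(X)
\]
with no local choices, so that $\phi$ is automatically global. It then remains to check that $\phi$ is an isomorphism, which may be tested on cohomology sheaves and is therefore a local question over $X$.

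The two extreme graded pieces do not require any factorial: the algebra unit $\CO_{X}\to\HC_{\bullet}(X)$ is split by the augmentation $\HC_{\bullet}(X)\to\CO_{X}$, exhibiting $\mathcal{H}^{0}=\CO_{X}$ as a canonical direct summand, while the canonical truncation supplies a map $\Omega_{X}^{n}[n]\to\HC_{\bullet}(X)$ inducing the identity on $\mathcal{H}^{-n}=\Omega_{X}^{n}$. The subtlety is confined to the interior range $1\leq p\leq n-1$, where the corresponding piece of $\phi$ factors through the antisymmetrization of a $p$-fold product of degree-one classes; locally the induced endomorphism of $\Omega_{X}^{p}$ is multiplication by $p!$ up to a unit. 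Hence the interior components of $\phi$ are isomorphisms precisely when $p!\in k^{\times}$ for all $1\leq p\leq n-1$, equivalently $(n-1)!\in k^{\times}$, which is exactly the condition $\mathrm{char}(k)\geq n=\dim\, X$.

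I expect the genuine difficulty---and the reason the bound is sharp at $\mathrm{char}(k)=\dim\, X$---to be the assembly of these pieces into a single isomorphism. At the boundary the top factorial $n!$ vanishes in $k$, so the algebra map $\phi$ fails on the summand $\Omega_{X}^{n}$ and that piece must instead be produced by the truncation map above; reconciling the interior part of $\phi$ with the separately constructed extreme summands into one morphism in $\D^{b}(\Coh(X))$ that is a quasi-isomorphism is the technical heart of \cite{Yek02, AV17}. In characteristic zero every factorial is a unit and the single map $\phi$ already yields the classical strong HKR theorem, whereas once $\dim\, X>\mathrm{char}(k)$ some interior $p!$ vanishes and the decomposition genuinely breaks, consistent with the forthcoming non-degeneracy examples of Antieau--Bhatt--Mathew.
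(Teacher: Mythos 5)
Your proposal matches the paper's treatment exactly: the paper offers no proof of this lemma, presenting it purely as a citation of \cite[Theorem 4.8]{Yek02} and \cite[Corollary 1.5]{AV17}, which is precisely your starting point. Your supplementary sketch of the mechanism is essentially sound (unit/augmentation splitting for $p=0$, the truncation $\tau_{\leq -n}$ for the bottom piece $p=n$, antisymmetrization with its factor $p!$ for interior $p$), though the final ``assembly'' you single out as the technical heart is in fact automatic---a direct sum of maps $\Omega_{X}^{p}[p]\to \HC_{\bullet}(X)$, each inducing an isomorphism on $\mathcal{H}^{-p}$ and necessarily inducing zero on every other cohomology sheaf for degree reasons, is already a quasi-isomorphism---so the real content of the cited papers lies instead in constructing those maps at all, i.e.\ in producing a workable multiplicative model of $\HC_{\bullet}(X)$ (Yekutieli's completed bar complex) on which the degree-one class exists and its powers can be formed.
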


In summary, combining with Theorem \ref{mainthm},
we have the following result.

\begin{cor}
Assume that $\mathrm{char}(k)\geq \dim\, X$. With the same setting as in Theorem \ref{mainthm},
there is an isomorphism
\begin{equation*}
\mathrm{HH}_{l}(\tilde{X}; \pi^{\ast}\V)
\cong
\mathrm{HH}_{l}(X; \V)
\oplus
\mathrm{HH}_{l}(Z; \iota^{\ast}\V)^{\oplus (c-1)}
\end{equation*}
for $-\dim\, X\leq l\leq \dim\, X$.
\end{cor}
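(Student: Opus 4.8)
The plan is to reduce the statement to the blow-up formula of Theorem \ref{mainthm} by replacing each Hochschild homology group with its Hodge decomposition. Since $\mathrm{char}(k)\geq \dim\,X$ and $\dim\,\tilde{X}=\dim\,X\geq \dim\,Z$, Lemma \ref{strong-HKR} applies simultaneously to $\tilde{X}$, $X$ and $Z$, so all three varieties satisfy the strong HKR theorem. Because $\V$ is locally free (hence flat), tensoring the isomorphism $\HC_{\bullet}(X)\cong \bigoplus_{p}\Omega_{X}^{p}[p]$ in $\D^{b}(\Coh(X))$ with $\V$ preserves it, so that for each of the three varieties one obtains an identification of the form
\[
\mathrm{HH}_{l}(X;\V)=H^{-l}\big(X,\HC_{\bullet}(X)\otimes\V\big)\cong\bigoplus_{p}H^{p-l}(X,\Omega_{X}^{p}\otimes\V),
\]
and likewise for $(\tilde{X},\pi^{\ast}\V)$ and $(Z,\iota^{\ast}\V)$.

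Next I would feed the blow-up formula into the $\tilde{X}$-decomposition term by term. Applying Theorem \ref{mainthm} with $q=p-l$ gives
\[
H^{p-l}(\tilde{X},\Omega_{\tilde{X}}^{p}\otimes\pi^{\ast}\V)\cong H^{p-l}(X,\Omega_{X}^{p}\otimes\V)\oplus\bigoplus_{i=1}^{c-1}H^{p-l-i}(Z,\Omega_{Z}^{p-i}\otimes\iota^{\ast}\V),
\]
and summing over $p$ identifies the first family of summands with $\mathrm{HH}_{l}(X;\V)$. For the remaining summands I would, for each fixed $i$, substitute $p'=p-i$; then $H^{p-l-i}(Z,\Omega_{Z}^{p-i}\otimes\iota^{\ast}\V)=H^{p'-l}(Z,\Omega_{Z}^{p'}\otimes\iota^{\ast}\V)$, and summing over $p'$ reassembles exactly $\mathrm{HH}_{l}(Z;\iota^{\ast}\V)$ via the strong HKR theorem for $Z$. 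As $i$ ranges from $1$ to $c-1$ this contributes $\mathrm{HH}_{l}(Z;\iota^{\ast}\V)^{\oplus(c-1)}$, yielding the asserted isomorphism.

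The only genuinely delicate point is this reindexing: one must check that the two shifts built into the $Z$-contribution of Theorem \ref{mainthm}, namely the drop by $i$ in the form degree and the drop by $i$ in the cohomological degree, cancel, so that each block is a bona fide Hochschild homology of $Z$ in the \emph{same} degree $l$ rather than in a shifted one. Everything else is bookkeeping, provided one first records that the hypothesis $\mathrm{char}(k)\geq\dim\,X$ indeed propagates to $\tilde{X}$ (equal dimension) and to $Z$ (a smooth subvariety of smaller dimension), which is what allows Lemma \ref{strong-HKR} to be invoked for all three varieties at once.
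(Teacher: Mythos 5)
Your proposal is correct and follows essentially the same route as the paper: invoke Lemma \ref{strong-HKR} for $\tilde{X}$, $X$ and $Z$, tensor the strong HKR decomposition with the (flat) locally free sheaf, take cohomology to write each Hochschild group as $\bigoplus_{p-q=l}H^{q}(\cdot,\Omega^{p}\otimes\cdot)$, and then apply Theorem \ref{mainthm} term by term. The reindexing point you flag (the simultaneous drop by $i$ in form degree and cohomological degree keeping $p-q=l$) is exactly what the paper's $\bigoplus_{p-q=l}$ bookkeeping handles implicitly, so your write-up is just a more explicit version of the same argument.
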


\begin{proof}
By Lemma \ref{strong-HKR},
for any smooth proper variety $Y$,
we have an isomorphism
$$
\HC_{\bullet}(Y)\otimes \mathcal{V}
\cong
\bigoplus_{p=0}^{\dim\, Y} \Omega_{Y}^{p} \otimes\mathcal{V} [p]
$$
in the derived category.
Taking cohomology yields
$$
\mathrm{HH}_{l}(Y; \mathcal{V})
\cong
\bigoplus_{p-q=l} H^{q}(Y, \Omega_{Y}^{p}\otimes \mathcal{V}).
$$
Combining this with Theorem \ref{mainthm},
we obtain
\begin{eqnarray*}
\mathrm{HH}_{l}(\tilde{X}; \pi^{\ast}\V)
&\cong&
\bigoplus_{p-q=l} H^{q}(\tilde{X}, \Omega_{\tilde{X}}^{p}\otimes\pi^{\ast}\V) \\
&\cong &
\bigoplus_{p-q=l}  \Big( H^{q}(X, \Omega_{X}^{p}\otimes \V)\oplus \bigoplus_{i=1}^{c-1} H^{q-i}(Z, \Omega_{Z}^{p-i}\otimes\iota^{\ast}\V)  \Big)  \\
&\cong &
\mathrm{HH}_{l}(X; \V) \oplus \mathrm{HH}_{l}(Z; \iota^{\ast}\V)^{\oplus c-1}
\end{eqnarray*}
for any $-\dim\, X\leq l\leq \dim\, X$.
\end{proof}

In the rest of this subsection,
we shall study the blow-up invariance of the $E_{2}$-degeneracy of the HKR spectral sequence.
To this end, we start with the following result.

\begin{lem}\label{Hochschild-blowup}
For every integer $-\dim\, X\leq l\leq \dim\, X$,
there holds an isomorphism of Hochschild homology
$$
\mathrm{HH}_{l}(\tilde{X})
\cong
\mathrm{HH}_{l}(X)
\oplus
\mathrm{HH}_{l}(Z)^{\oplus (c-1)}.
$$
\end{lem}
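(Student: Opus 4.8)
The plan is to obtain this isomorphism not from the Hodge blow-up formula of Corollary~\ref{mainthm2} (which would presuppose that the HKR spectral sequences of $\tilde{X}$, $X$ and $Z$ all degenerate at $E_2$), but from the derived-category structure of the blow-up, exploiting that Hochschild homology is an invariant of the derived category rather than of the individual Hodge groups. First I would recall that for any smooth proper variety $Y$ over $k$ the variety-level Hochschild homology $\mathrm{HH}_l(Y)=H^{-l}(Y,\HC_\bullet(Y))$, with $\HC_\bullet(Y)=L\Delta^\ast(\Delta_\ast\CO_Y)$, coincides with the categorical Hochschild homology of the dg-enhancement of $\D^{b}(\Coh(Y))$; this comparison is valid over an arbitrary field once $Y$ is smooth and proper, and it respects the homological grading. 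Since $X$, $Z$ and $\tilde{X}$ are all smooth and proper, it therefore suffices to produce a graded isomorphism of categorical Hochschild homologies.

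The key geometric input is Orlov's blow-up formula for derived categories, which holds over an arbitrary base: the blow-up $\pi:\tilde{X}\to X$ along the smooth center $Z$ of codimension $c$ induces a semiorthogonal decomposition
\begin{equation*}
\D^{b}(\Coh(\tilde{X}))=\big\langle\, L\pi^\ast\D^{b}(\Coh(X)),\ \Psi_1\D^{b}(\Coh(Z)),\ \ldots,\ \Psi_{c-1}\D^{b}(\Coh(Z))\,\big\rangle,
\end{equation*}
in which $L\pi^\ast$ and each $\Psi_i=\tilde{\iota}_\ast\big(\rho^\ast(-)\otimes\CO_E(i)\big)$ is a fully faithful functor with admissible image. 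Thus one copy of $\D^{b}(\Coh(X))$ and $c-1$ copies of $\D^{b}(\Coh(Z))$ appear as the semiorthogonal components, each realized by an exact equivalence onto its image.

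I would then invoke Kuznetsov's additivity theorem, which asserts that Hochschild homology is additive over admissible semiorthogonal decompositions; applied to the decomposition above it yields a graded isomorphism
\begin{equation*}
\mathrm{HH}_\bullet(\tilde{X})\ \cong\ \mathrm{HH}_\bullet(X)\ \oplus\ \mathrm{HH}_\bullet(Z)^{\oplus(c-1)}.
\end{equation*}
Because each $\Psi_i$ is an exact equivalence onto its image, Morita invariance identifies the Hochschild homology of each component with $\mathrm{HH}_\bullet(X)$ or $\mathrm{HH}_\bullet(Z)$ without any shift, so the isomorphism is grading-preserving; extracting the degree-$l$ component for $-\dim\,X\le l\le\dim\,X$ then gives exactly the asserted formula.

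The delicate point is not the formal bookkeeping but checking that all three inputs remain available in positive characteristic. Orlov's semiorthogonal decomposition is proved via Fourier--Mukai kernels and needs only the smoothness of $X$ and $Z$ together with the standard exact sequences on the exceptional divisor $E$ already used in Lemma~\ref{key-iso}, so it is characteristic-free; Kuznetsov's additivity is a purely dg-categorical statement and hence independent of the ground field; and the identification of $H^{-l}(Y,\HC_\bullet(Y))$ with categorical Hochschild homology only uses that $\mathrm{Perf}(Y)\simeq\D^{b}(\Coh(Y))$ is a smooth proper dg-category, which holds for smooth proper $Y$ over any field. I expect verifying these characteristic-free inputs, and matching the homological grading conventions, to be the main obstacle; the remainder is then immediate.
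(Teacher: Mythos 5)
Your proposal is correct and follows essentially the same route as the paper's own proof: both identify variety-level Hochschild homology with the categorical Hochschild homology of $\D^{b}(\Coh(-))$, apply Orlov's semiorthogonal decomposition for the blow-up, and conclude by Kuznetsov's additivity theorem. Your additional care about grading conventions and the characteristic-free validity of the three inputs is a welcome refinement, but it does not change the argument.
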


\begin{proof}
It is known that,
for any smooth proper variety $Y$,
$\mathrm{HH}_{l}(Y)\cong\mathrm{HH}_{l}(\D^{b}(Y))$ (cf. \cite[Theorem 4.5]{Kuz09}).
By Orlov's blow-up formula \cite{Orl93} (cf. \cite[Proposition 11.18]{Huy06}),
we have a semiorthogonal decomposition
$$
\D^{b}(\tilde{X})
=
\langle L\pi^{\ast}\D^{b}(X), \tilde{\iota}_{\ast}(\CO_{E}(1)\otimes\rho^{\ast}\D^{b}(Z)),\ldots,\tilde{\iota}_{\ast}(\CO_{E}(c-1)\otimes \rho^{\ast}\D^{b}(Z)) \rangle.
$$
Since the functors $\tilde{\iota}_{\ast}(\CO_{E}(s)\otimes \rho^{\ast}-)$ for $1\leq s\leq c-1$ and $L\pi^{\ast}$ are full-faithful, $\tilde{\iota}_{\ast}(\CO_{E}(s)\otimes \rho^{\ast}\D^{b}(Z))$ is equivalent to $\D^{b}(Z)$ and $L\pi^{\ast}\D^{b}(X)$ is equivalent to $\D^{b}(X)$ as triangulated categories.
Therefore, the corollary follows from Kuznetsov's additivity for Hochschild homology (\cite[Theorem 7.3]{Kuz09}).
\end{proof}

\begin{cor}\label{HKR-blowup}
The weak HKR theorem holds for $X$ and $Z$ if and only if so does for $\tilde{X}$.
\end{cor}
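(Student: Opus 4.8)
The plan is to reduce the assertion to a numerical comparison governed by the universal inequality attached to the HKR spectral sequence. For any smooth proper variety $Y$, the inequality stated above yields
\begin{equation*}
\dim\, \mathrm{HH}_{l}(Y)\leq \sum_{p-q=l} h^{q}(Y, \Omega_{Y}^{p})
\end{equation*}
for every $l$, with equality for all $l$ precisely when $Y$ satisfies the weak HKR theorem. So it suffices to check that both sides of this inequality are additive in the same shape under the blow-up, and then to exploit the strict positivity of the multiplicity $c-1$.

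First I would compute the right-hand (Hodge) side for $\tilde{X}$. Summing the Hodge blow-up formula of Corollary \ref{mainthm2} over all $(p,q)$ with $p-q=l$ gives
\begin{equation*}
\sum_{p-q=l} h^{q}(\tilde{X}, \Omega_{\tilde{X}}^{p})
=\sum_{p-q=l} h^{q}(X, \Omega_{X}^{p})
+\sum_{i=1}^{c-1}\ \sum_{p-q=l} h^{q-i}(Z, \Omega_{Z}^{p-i}).
\end{equation*}
After the index shift $p\mapsto p+i$, $q\mapsto q+i$, which preserves $p-q=l$, each inner sum equals $\sum_{p-q=l} h^{q}(Z, \Omega_{Z}^{p})$; hence the Hodge side of $\tilde{X}$ is that of $X$ plus $(c-1)$ times that of $Z$. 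On the left-hand (Hochschild) side, Lemma \ref{Hochschild-blowup} gives the identity
\begin{equation*}
\dim\, \mathrm{HH}_{l}(\tilde{X})
=\dim\, \mathrm{HH}_{l}(X)
+(c-1)\dim\, \mathrm{HH}_{l}(Z)
\end{equation*}
directly.

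With these two identities in hand, I would finish by comparing the deficits. Writing $d_l(Y):=\sum_{p-q=l} h^{q}(Y, \Omega_{Y}^{p})-\dim\,\mathrm{HH}_{l}(Y)\geq 0$ for the Hodge--Hochschild defect, the two additivity relations combine into
\begin{equation*}
d_l(\tilde{X})=d_l(X)+(c-1)\, d_l(Z).
\end{equation*}
Since $c-1\geq 1$ and every $d_l$ is nonnegative, the vanishing $d_l(\tilde{X})=0$ for all $l$ holds if and only if $d_l(X)=0$ and $d_l(Z)=0$ for all $l$. Translating back through the degeneration criterion, this says exactly that the weak HKR theorem holds for $\tilde{X}$ if and only if it holds for both $X$ and $Z$, as desired. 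There is no genuine obstacle beyond this bookkeeping: the only substantive inputs are the two blow-up formulas, and the decisive point is that the multiplicity $c-1$ is strictly positive, so that a single vanishing $d_l(\tilde{X})=0$ forces both summands to vanish separately.
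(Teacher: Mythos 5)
Your proposal is correct and follows essentially the same route as the paper: both use Lemma \ref{Hochschild-blowup} together with Corollary \ref{mainthm2} to show that the Hodge--Hochschild defect of $\tilde{X}$ equals that of $X$ plus $(c-1)$ copies of that of $Z$, and then conclude from nonnegativity of each defect. Your explicit index shift $p\mapsto p+i$, $q\mapsto q+i$ just makes transparent a re-indexing the paper leaves implicit.
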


\begin{proof}
By Lemma \ref{Hochschild-blowup} and \eqref{Hodge-bwf},
we have
\begin{eqnarray*}
&&\underbrace{\dim\,\mathrm{HH}_{l}(\tilde{X})-\sum_{p-q=l} h^{q}(\tilde{X},\Omega_{\tilde{X}}^{p})}_{\leq 0}\\
&=&\underbrace{\dim\,\mathrm{HH}_{l}(X)-\sum_{p-q=l} h^{q}(X,\Omega_{X}^{p})}_{\leq 0} +
\sum_{i=1}^{c-1}\big( \underbrace{\dim\,\mathrm{HH}_{l}(Z)-
\sum_{p-q=l} h^{q-i}(Z,\Omega_{Z}^{p-i}}_{\leq 0}\big).
\end{eqnarray*}
Consequently, this corollary follows from the definition of the weak HKR theorem.
\end{proof}

In particular, we have the following.

\begin{cor}
If $\dim\, Z \leq \mathrm{char}(k)<\dim\, X$,
then the weak HKR theorem holds for $X$ if and only if it holds for $\tilde{X}$.
\end{cor}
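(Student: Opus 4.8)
The plan is to derive this corollary directly from Corollary \ref{HKR-blowup} together with Lemma \ref{strong-HKR}. The crucial observation is that the hypothesis $\dim\, Z\leq \mathrm{char}(k)$ forces the center $Z$ to satisfy the weak HKR theorem automatically, so that $Z$ drops out of the biconditional in Corollary \ref{HKR-blowup}.

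First I would apply Lemma \ref{strong-HKR} to the smooth proper variety $Z$. Since $\dim\, Z\leq \mathrm{char}(k)$ by assumption, that lemma yields that $Z$ satisfies the strong HKR theorem; in particular, the $E_2$-degeneracy of the HKR spectral sequence holds on $Z$, i.e., the weak HKR theorem holds for $Z$ without any further hypothesis. Note that this is consistent with the codimension constraint $c\geq 2$, which gives $\dim\, Z=\dim\, X-c<\dim\, X$, so the interval $[\dim\, Z,\,\dim\, X)$ containing $\mathrm{char}(k)$ is nonempty and the statement is meaningful.

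Next I would invoke Corollary \ref{HKR-blowup}, which asserts that the weak HKR theorem holds for $\tilde{X}$ if and only if it holds for both $X$ and $Z$. Having just established that it always holds for $Z$ under the present hypothesis, the condition on $Z$ is vacuously satisfied, and the biconditional collapses to the statement that the weak HKR theorem holds for $\tilde{X}$ if and only if it holds for $X$. This is precisely the assertion of the corollary.

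There is no genuine obstacle here: the entire content resides in the two cited results, and the argument is purely a matter of logical bookkeeping. The only point that requires attention is verifying that the hypothesis on $Z$ in Corollary \ref{HKR-blowup} is subsumed by Lemma \ref{strong-HKR}, which is immediate from $\dim\, Z\leq \mathrm{char}(k)$. The upper bound $\mathrm{char}(k)<\dim\, X$ plays no role in the derivation; its sole purpose is to prevent $X$ from also being covered by Lemma \ref{strong-HKR}, thereby ensuring that the corollary is not vacuous for $X$ itself.
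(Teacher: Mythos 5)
Your proposal is correct and matches the paper's own (implicit) derivation: the corollary is stated there as an immediate consequence of Corollary \ref{HKR-blowup}, with Lemma \ref{strong-HKR} guaranteeing that $Z$ satisfies the (strong, hence weak) HKR theorem because $\dim Z \leq \mathrm{char}(k)$, so the condition on $Z$ drops out of the biconditional. Your side remark that the bound $\mathrm{char}(k)<\dim X$ only serves to keep the statement from being subsumed by Lemma \ref{strong-HKR} applied to $X$ is also accurate.
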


\begin{example}
The weak HKR theorem holds for smooth complete intersections in $\mathbb{P}^{N}$;
see for example \cite[Example 1.7]{AV17}.
Therefore, for instance, in $\mathrm{char}(k)=2$,
based on the above corollary and \cite[Example 1.4]{AV17},
one may construct many new examples of smooth projective varieties satisfying the weak HKR theorem by blowing up along points, curves or smooth surfaces.
\end{example}

Naturally, one may ask the following problem.

\begin{prob}
Suppose that $X$ is a smooth proper variety over $k$ and $\dim\, X-2 \leq \mathrm{char}(k)<\dim\, X$.
Is the weak HKR theorem a birational property of $X$?
\end{prob}

\begin{rem}\label{construct-example}
In \cite{ABM19},
Antieau--Bhatt--Mathew will show that the HKR spectral sequence does not generally degenerate at $E_2$ in the case of $\dim \,X=2\cdot\mathrm{char}(k)>0$.
This also gives a negative answer to Problem \ref{quest-HKR}.
Based on their examples, Corollary \ref{HKR-blowup} can provide more examples such that the HKR spectral sequence does not generally degenerate at $E_{2}$.
\end{rem}

Finally, we have the following observation to construct more higher odd-dimensional examples
such that the HKR spectral sequence does not generally degenerate at $E_{2}$.

\begin{cor}
Let $X$ be a smooth proper variety over $k$ and  $\E$ a locally free sheaf of rank $c$ on $X$.
Then the weak HKR theorem holds for the projective bundle $\mathbb{P}(\E)$ if and only if it  holds on $X$.
\end{cor}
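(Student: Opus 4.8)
The plan is to run the same argument as Corollary~\ref{HKR-blowup}, with the blow-up replaced by the bundle projection $\rho:\mathbb{P}(\E)\longrightarrow X$. Recall that a smooth proper variety $Y$ satisfies the weak HKR theorem precisely when the HKR inequality $\dim\,\mathrm{HH}_{l}(Y)\leq\sum_{p-q=l}h^{q}(Y,\Omega_{Y}^{p})$ is an equality for every $l$. So it suffices to compute both the Hochschild side and the Hodge side for $\mathbb{P}(\E)$ in terms of those for $X$, and to observe that each is scaled by the same factor $c=\mathrm{rank}\,\E$.

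First I would handle the Hochschild homology exactly as in Lemma~\ref{Hochschild-blowup}. By Orlov's projective bundle formula (cf. \cite[Proposition 11.18]{Huy06}) one has a semiorthogonal decomposition
\[
\D^{b}(\mathbb{P}(\E))
=\big\langle L\rho^{\ast}\D^{b}(X),\ \CO_{\mathbb{P}(\E)}(1)\otimes L\rho^{\ast}\D^{b}(X),\ \ldots,\ \CO_{\mathbb{P}(\E)}(c-1)\otimes L\rho^{\ast}\D^{b}(X)\big\rangle
\]
into $c$ admissible subcategories, each equivalent to $\D^{b}(X)$ because the functors $L\rho^{\ast}$ and $\CO_{\mathbb{P}(\E)}(s)\otimes L\rho^{\ast}(-)$ are fully faithful. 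Using $\mathrm{HH}_{l}(Y)\cong\mathrm{HH}_{l}(\D^{b}(Y))$ and Kuznetsov's additivity of Hochschild homology \cite[Theorem 7.3]{Kuz09}, this yields
\[
\mathrm{HH}_{l}(\mathbb{P}(\E))\cong\mathrm{HH}_{l}(X)^{\oplus c}
\]
for every $l$.

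Next I would treat the Hodge side via the projective bundle formula for Hodge cohomology. Applying the isomorphism \eqref{essetial-pbformula}, now to the $\mathbb{P}^{c-1}$-bundle $\rho:\mathbb{P}(\E)\to X$ in place of $E\to Z$, and taking cohomology $H^{q}(X,-)$ as in Step~3 of the proof of Theorem~\ref{mainthm}, gives
\[
H^{q}(\mathbb{P}(\E),\Omega^{p}_{\mathbb{P}(\E)})\cong\bigoplus_{i=0}^{c-1}H^{q-i}(X,\Omega_{X}^{p-i}).
\]
Summing over $p-q=l$, interchanging the two summations, and reindexing the inner sum for each $i$ by $(p,q)\mapsto(p-i,q-i)$, which preserves $p-q=l$, I obtain
\[
\sum_{p-q=l}h^{q}(\mathbb{P}(\E),\Omega^{p}_{\mathbb{P}(\E)})
=\sum_{i=0}^{c-1}\sum_{p-q=l}h^{q}(X,\Omega_{X}^{p})
=c\cdot\sum_{p-q=l}h^{q}(X,\Omega_{X}^{p}).
\]
Combining the two displays, the HKR inequality for $\mathbb{P}(\E)$ becomes $c\cdot\dim\,\mathrm{HH}_{l}(X)\leq c\cdot\sum_{p-q=l}h^{q}(X,\Omega_{X}^{p})$, so it is an equality for all $l$ if and only if the corresponding inequality for $X$ is. I do not expect a serious obstacle; the only delicate points are the reindexing that produces the uniform factor $c$ on the Hodge side, and the fact that $\mathrm{HH}_{l}(Y)\cong\mathrm{HH}_{l}(\D^{b}(Y))$ together with Kuznetsov additivity remains valid in positive characteristic, which is exactly what was invoked in Lemma~\ref{Hochschild-blowup}.
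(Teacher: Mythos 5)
Your proposal is correct and follows essentially the same route as the paper: Orlov's projective bundle formula plus Kuznetsov's additivity to get $\mathrm{HH}_{l}(\mathbb{P}(\E))\cong\mathrm{HH}_{l}(X)^{\oplus c}$, the Hodge projective bundle formula \eqref{essetial-pbformula} to get the matching factor of $c$ on the Hodge side, and the comparison of the two via the defining inequality of the weak HKR theorem. The only difference is presentational: you spell out the reindexing of the Hodge sum that the paper leaves implicit in its displayed equality.
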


\begin{proof}
Similar to Lemma \ref{Hochschild-blowup},
by Orlov's projective bundle formula (\cite{Orl93} or \cite{Huy06}) and Kuznetsov's additivity of Hochschild homology, we have
$$
\mathrm{HH}_{l}(\mathbb{P}(\E))
\cong
\mathrm{HH}_{l}(X)^{\oplus c}.
$$
As a result, we get
\begin{equation*}
\dim\,\mathrm{HH}_{l}(\mathbb{P}(\E))-\sum_{p-q=l} h^{q}(\mathbb{P}(\E),\Omega_{\mathbb{P}(\E)}^{p})
=c\cdot \big(\dim\,\mathrm{HH}_{l}(X)-\sum_{p-q=l} h^{q}(X,\Omega_{X}^{p})\big).
\end{equation*}
Thus, this corollary follows from the definition of the weak HKR theorem.
\end{proof}

\begin{rem}\label{rem5.25}
Combining this with Antieau--Bhatt--Mathew's examples,
one can obtain odd-dimensional ($\geq 5$) smooth proper varieties such that the HKR spectral sequence does not degenerate at $E_{2}$.
\end{rem}


\section*{Acknowledgements}
This work started when the first three authors were visiting Institut Fourier (Math\'{e}matiques) at Universit\'{e} de Grenoble-Alpes, Departments of Mathematics at Universit\`{a} degli Studi di Milano and Cornell University, respectively; they would like to thank those institutes for the hospitality and providing the wonderful working environment.
Last but not least, all the authors sincerely thank Professor V. Navarro Aznar for pointing out the paper \cite{Hua01} to them and answering their question on Lemma \ref{iso1-2-3} (iii), and Professor K. R\"ulling for his important comment on their proof of the main theorem, and also Professor B. Antieau for sending us their paper \cite{ABM19}.
Finally, the authors are grateful to the anonymous referee for useful comments and suggestions.
S. Rao is partially supported by the National Nature Science Foundation of China (Grant No. 11671305, 11771339, 11922115) and the Fundamental Research Funds for the Central Universities (Grant No. 2042020kf1065).
S. Yang and X.-D. Yang are partially supported by the National Nature Science Foundation of China (Grant No. 11701414, 11571242,11701051) and the China Scholarship Council.
X. Yu is partially supported by the National Nature Science Foundation of China (Grant No. 11701413, 11831013).


\end{document}